
\documentclass[letterpaper, 10 pt, conference]{ieeeconf}  

\IEEEoverridecommandlockouts                              

\overrideIEEEmargins                                      



\usepackage{times}
\usepackage{enumerate}
\usepackage{mathtools}
\usepackage{dsfont}
\usepackage{amsmath}
\usepackage{amssymb} 
\usepackage{color}
\usepackage{multirow}
\newtheorem{theorem}{Theorem}

\newtheorem{problem}{Problem}
\newtheorem{corollary}{Corollary}
\DeclareMathOperator*{\argmin}{arg\,min}
\title{\LARGE \bf
Chance-Constrained Stochastic Optimal Control via\\ Path Integral and Finite Difference Methods}

\author{Apurva Patil$^1$ \and Alfredo Duarte$^2$ \and Aislinn Smith$^3$ \and Takashi Tanaka$^2$ \and Fabrizio Bisetti$^2$ 
\thanks{$^{1}$Walker Department of Mechanical Engineering, University of Texas at Austin. {\tt\small apurvapatil@utexas.edu}.
$^{2}$Department of Aerospace Engineering and Engineering Mechanics, University of Texas at Austin. {\tt\small aduarteg@utexas.edu}, {\tt\small ttanaka@utexas.edu}, {\tt\small fbisetti@utexas.edu}.
$^{3}$Department of Mathematics, University of Texas at Austin. {\tt\small aislinnsmith@utexas.edu}.}
}
\begin{document}

\maketitle
\thispagestyle{empty}
\pagestyle{empty}

\begin{abstract}
This paper addresses a continuous-time continuous-space chance-constrained stochastic optimal control (SOC) problem via a Hamilton-Jacobi-Bellman (HJB) partial differential equation (PDE). Through Lagrangian relaxation, we convert the chance-constrained (risk-constrained) SOC problem to a risk-minimizing SOC problem, the cost function of which possesses the time-additive Bellman structure. We show that the risk-minimizing control synthesis is equivalent to solving an HJB PDE whose boundary condition can be tuned appropriately to achieve a desired level of safety. Furthermore, it is shown that the proposed risk-minimizing control problem can be viewed as a generalization of the problem of estimating the risk associated with a given control policy. Two numerical techniques are explored, namely the path integral and the finite difference method (FDM), to solve a class of risk-minimizing SOC problems whose associated HJB equation is linearizable via the Cole-Hopf transformation. Using a 2D robot navigation example, we validate the proposed control synthesis framework and compare the solutions obtained using path integral and FDM.
\end{abstract}
\section{Introduction}\label{Sec: Introduction}
Safety-critical systems are required to be controlled within their predefined safe regions over the entire operation horizon. For the controller synthesis of such systems, one must balance the trade-off between control cost and safety. For instance, in motion planning, an autonomous vehicle is required to reach a goal position with minimum fuel or time consumption, while avoiding obstacles on the way. When deployed in real-life environments, these systems are subject to uncertainties that arise due to modeling errors, uncertain localization, and disturbances. While some studies consider robust control under set-bounded uncertainties \cite{kothare1996robust}, \cite{goulart2006optimization}, \cite{kuwata2007robust}, in many cases, modeling uncertainties with unbounded (e.g.  Gaussian) distributions is advantageous over a set-bounded approach \cite{blackmore2011chance}. In the case of unbounded uncertainties, it is generally difficult to guarantee safety against all realizations of noise. An alternative approach is to design a controller subject to the constraint that the probability of failure to satisfy given state constraints is bounded by a user-specified threshold. This problem is known as the \textit{chance-constrained} or \textit{risk-constrained} stochastic optimal control (SOC) problem \cite{blackmore2011chance}, \cite{oguri2019convex}.\par
Unfortunately, the probability of failure is in general challenging to evaluate and optimize against since it involves multidimensional integration \cite{paulson2019efficient}, \cite{frey2020collision}. Various approaches have been proposed in the literature that use conservative approximations (upper bounds) of the failure probability to provide tractable approximations of the risk-constrained SOC problem; for instance, approaches based on Boole's inequality \cite{blackmore2011chance}, \cite{ono2010chance}, Chebyshev inequality \cite{nakka2019trajectory}, moment-based surrogate \cite{paulson2019efficient}, and stochastic control barrier functions (CBFs) \cite{prajna2007framework}, \cite{yaghoubi2020risk}, \cite{santoyo2021barrier}. However, in these bounding-based approaches, the bounds can be too conservative, leading to overly cautious policies that compromise performance or can cause artificial infeasibilities \cite{chern2021safe}, \cite{patil2021upper}. In some cases, these bounds can also be lax and generate risky policies \cite{ariu2017chance}, \cite{frey2020collision}. A dynamic-programming-based solution approach for discrete-time chance-constrained SOC problem is proposed in \cite{ono2015chance} that uses Lagrangian relaxation to convert the risk-constrained problem to a risk-minimizing one, similar to this work. However, in order to restore the time-additive Bellman structure of the Lagrangian (required in dynamic programming), \cite{ono2015chance} uses a conservative approximation of the failure probability using Boole's inequality. In this work, we leverage the notion of exit time from the continuous-time stochastic calculus to formulate the risk-constrained SOC problem. The resulting Lagrangian of the risk-minimizing SOC problem possesses the time-additive structure, which allows us to rewrite the risk-minimizing synthesis problem as a problem of solving a Hamilton-Jacobi-Bellman (HJB) partial differential equation (PDE). This in turn allows us to evaluate and optimize against the failure probability without having to introduce any conservative approximations. \par 
The contributions of this work are summarized as follows: 
\begin{enumerate}
    \item We consider a continuous-time continuous-space risk-constrained SOC problem with control-affine state dynamics. Through Lagrangian relaxation, we convert the risk-constrained SOC problem to a risk-minimizing SOC problem whose cost function possesses the time-additive Bellman structure. We show that an optimal risk-minimizing policy can be synthesized by solving an HJB PDE whose boundary condition can be tuned appropriately to achieve a desired level of safety. 
    \item It is shown that the proposed risk-minimizing control problem can be viewed as a generalization of the risk estimation problem. The risk associated with any given control policy can be computed by solving a PDE which is a special case of the HJB PDE associated with the risk-minimizing SOC problem.
    \item We apply two numerical techniques, path integral and finite difference method (FDM), to solve a class of risk-minimizing SOC problems whose associated HJB equation is linearizable via the Cole-Hopf transformation. The proposed control synthesis framework is validated through a 2D robot navigation problem, and the solutions obtained using path integral and FDM are compared. 
\end{enumerate}
\subsection*{Notation}
Let $\mathbb{R}$, $\mathbb{R}^n$, and $\mathbb{R}^{m\times n}$ be the set of real numbers, the set of $n$-dimensional real vectors, and the set of $m\times n$ real matrices. 
Bold symbols such as $\pmb{x}$ represent random variables. Let $\mathds{1}_{\mathcal{E}}$ be an indicator function, which returns $1$ when the condition $\mathcal{E}$ holds and 0 otherwise. 
The $\bigvee$ symbol represents a logical OR implying existence of a satisfying event among a (possibly uncountable) collection. $\partial_x$ and $\partial^2_x$ are used to define, respectively, the first and second-order partial derivatives w.r.t. $x$. If $x$ is a vector then $\partial_x$ returns a column vector and $\partial^2_x$ returns a matrix. $\text{Tr}(A)$ denotes the trace of a matrix $A$.   
\section{Problem Formulation}
We consider a random process $\pmb{x}(t)\in\mathbb{R}^n$ driven by the following control-affine stochastic differential equation (SDE):
\begin{equation}\label{SDE}
    d\pmb{x}\!=\!\!{f}\!\left(\pmb{x}(t),\! t\right)\!dt\!+\!{g}\!\left(\pmb{x}(t), \!t\right)\!{u}(\pmb{x}(t),\! t)dt\!+\!{\sigma}\!\left(\pmb{x}(t),\! t\right)\!d\pmb{w},
\end{equation}
where ${u}\left(\pmb{x}(t), t\right)\in\mathbb{R}^m$ is a control input, $\pmb{w}(t)\in\mathbb{R}^k$ is a $k$-dimensional standard Wiener process on a suitable probability space $\left(\Omega, \mathcal{F}, P\right)$, ${f}\left(\pmb{x}(t), t\right)\in\mathbb{R}^n$, ${g}\left(\pmb{x}(t), t\right)\in\mathbb{R}^{n\times m}$ and ${\sigma}\left(\pmb{x}(t), t\right)\in\mathbb{R}^{n\times k}$. Throughout this paper, we assume sufficient regularity in the coefficients of (\ref{SDE}) so that there exists a unique strong solution to the SDE. At the initial time $t_0$, the state of the system $\pmb{x}(t_0)=x_0$ is fixed. Let $\mathcal{X}_{s}\subseteq\mathbb{R}^n$ be a bounded open set representing a safe region, $\partial\mathcal{X}_{s}$ be its boundary, and $\overline{\mathcal{X}_{s}}=\mathcal{X}_{s}\cup\partial\mathcal{X}_{s}$. For a given finite time horizon $t\in[t_0, T]$, let $\mathcal{Q}=\mathcal{X}_s\times[t_0, T)$ be a bounded open region, $\partial\mathcal{Q}=\left(\partial\mathcal{X}_s\times[t_0,T]\right)\cup\left(\mathcal{X}_s\times\{T\}\right)$ be its boundary, and $\overline{\mathcal{Q}}=\mathcal{Q}\cup\partial\mathcal{Q}=\overline{\mathcal{X}_s}\times[t_0, T]$. Define the \textit{exit time} $\pmb{t}_f$ as follows:
\begin{equation}\label{tf}
\pmb{t}_f \coloneqq 
\begin{cases}
T, & \!\!\!\!\!\!\!\!\!\!\!\!\!\!\!\text{if}\;\; \pmb{x}(t)\in\mathcal{X}_{s}, \forall t\in[t_0, T],\\
\text{inf}\;\{t\in[t_0, T] : \pmb{x}(t)\notin\mathcal{X}_{s}\}, & \text{otherwise}.
 \end{cases}
\end{equation}
Note that $\left(\pmb{x}(\pmb{t}_f),\pmb{t}_f\right)\in\partial{\mathcal{Q}}$. If the system (\ref{SDE}) leaves the region $\mathcal{X}_{s}$ at any time $t\in[t_0, T]$, then we say that it fails. The probability of failure $P_{fail}$ of system (\ref{SDE}) is defined as:   
\begin{equation}\label{pfail}
    P_{fail}=P\left(\bigvee_{t\in[t_0, T]} \pmb{x}(t)\notin \mathcal{X}_{s}\right).
\end{equation}
We suppose that the cost function of the optimal control problem is quadratic in the control input and has the form:
\begin{equation}\label{C}
\begin{split}
 C\left(x_0, t_0, {u}(\cdot)\right)&\coloneqq\mathbb{E}_{x_0,t_0}\Bigg[\psi\left(\pmb{x}(\pmb{t}_f)\right)\cdot\mathds{1} _{\pmb{x}(\pmb{t}_f)\in \mathcal{X}_{s}}+\\
 &\!\!\!\!\!\!\!\!\!\!\!\!\!\!\!\!\!\!\!\!\!\!\!\!\!\!\!\!\!\!\!\!\!\!\!\!\!\!\!\int_{t_0}^{\pmb{t}_f}\left(\frac{1}{2}\pmb{u}^T{R}\left(\pmb{x}(t), t\right)\pmb{u}+ V\left(\pmb{x}(t), t\right)\right)dt\Bigg]
\end{split}
\end{equation}
where $\psi\left(\pmb{x}(\pmb{t}_f)\right)$ denotes a terminal cost, $V\left(\pmb{x}(t), t\right)$ a state dependent running cost, and $R\left(\pmb{x}(t), t\right)\in\mathbb{R}^{m\times m}$ a given positive definite matrix (for all values of $\pmb{x}(t)$ and $t$). $\mathbb{E}_{x_0, t_0}\left[\cdot\right]$ denotes the expectation taken over trajectories of system (\ref{SDE}) starting at $(x_0, t_0)$ under the probability law $P$. In the rest of the paper, for notational compactness, we will drop the functional dependencies on $x$ and $t$ whenever it is unambiguous. \par
Now, we wish to find an optimal control policy such that $C\left(x_0, t_0, u(\cdot)\right)$ is minimal and the probability of failure is below a specified threshold. This problem can be formulated as a risk-constrained SOC problem as follows:
\begin{problem}[Risk-constrained SOC problem]\label{Problem: Risk-constrained SOC problem}
    \begin{equation}\label{CC-SOC}
    \begin{aligned}
    \argmin_{{u}(\cdot)}\;& \mathbb{E}_{x_0, t_0}\!\!\left[\!\psi\!\left(\pmb{x}(\pmb{t}_f\!)\right)\!\cdot\!\mathds{1} _{\pmb{x}(\pmb{t}_f)\in \mathcal{X}_{s}}\!\!+\!\!\!\int_{t_0}^{\pmb{t}_f}\!\!\!\left(\!\frac{1}{2}\pmb{u}^T\!\!{R}\pmb{u}\!+\!V\!\right)\!dt\!\right]\\
    \textrm{s.t.} \;\; & d\pmb{x}={f}dt+{g}\,{u}dt+{\sigma}d\pmb{w},\quad \pmb{x}(t_0)=x_0,\\
      &P\left(\bigvee_{t\in[t_0, T]} \pmb{x}(t)\notin \mathcal{X}_{s}\right)\leq\Delta,
    \end{aligned}    
    \end{equation}
    where $\Delta\in(0,1)$ represents a given risk tolerance over the horizon $[t_0, T]$, and the admissible policy $u(\cdot)$ is measurable with respect to the $\sigma$-algebra adapted to $\pmb{x}(t)$.
\end{problem}
Note that our cost function is defined over the time horizon $[t_0, \pmb{t}_f]$ instead of $[t_0, T]$, because we assume that once the system fails, there is no need to consider the cost incurred after that. Moreover, this formulation will be useful to construct a time-additive Lagrangian for the risk-minimizing SOC problem as shown in the next section.
\section{Synthesis of a Risk-Minimizing Policy}\label{Sec: Safe control}
In this section, we convert the chance-constrained SOC problem to a risk-minimizing SOC problem and show that an optimal risk-minimizing control policy can be synthesized by solving an HJB PDE with an appropriate boundary condition.
\subsection{Risk-Minimizing SOC Problem}\label{Section: Risk-Minimizing SOC problem}
We approach the risk-constrained SOC problem (\ref{CC-SOC}) via Lagrangian relaxation. Define a risk-minimizing cost function:
\begin{equation}\label{Chat_old}
\begin{aligned}
     \!\!\widehat{C}\!\left(x_0,\!t_0,\! {u}(\cdot)\right)\!\coloneqq C\!\left(x_0,\! t_0, \!u(\cdot)\right)+\!\eta\,P\!\left(\!\bigvee_{t\in[t_0, T]} \!\!\!\pmb{x}(t)\!\notin\! \mathcal{X}_{s}\!\!\right)
\end{aligned}
\end{equation}
where $\eta\geq 0$ is the Lagrange multiplier corresponding to the risk constraint in (\ref{CC-SOC}). Establishing the exact correspondence between $\eta$ and the risk tolerance $\Delta$ (e.g. the Lagrange multiplier theorem \cite{bertsekas1997nonlinear}) is not in the scope of this paper and left as a topic for future work. In what follows, we will treat $\eta$ as a given constant, and focus on the control synthesis to minimize (\ref{Chat_old}). In Section \ref{Sec: Simulation}, it is demonstrated that $\eta$ can be tuned appropriately to achieve a desired level of safety.\par 

Let us express $P_{fail}$ (\ref{pfail}) in terms of the exit time $\pmb{t}_f$ as follows:
\begin{equation}\label{pfail2}
    P\left(\bigvee_{t\in[t_0, T]} \pmb{x}(t)\notin \mathcal{X}_{s}\right)=\mathbb{E}_{x_0, t_0}\left[\mathds{1} _{\pmb{x}(\pmb{t}_f)\in \partial\mathcal{X}_{s}}\right].
\end{equation}
Define $\phi:\overline{\mathcal{X}_s}\to\mathbb{R}$ as\footnote{In the sequel, the function $\phi(x)$ sets a boundary condition for a PDE. There, we often need technical assumptions on the regularity of $\phi(x)$ (e.g., continuity on $\overline{\mathcal{X}_s}$) to guarantee the existence of a solution. When such requirements are needed, we approximate  (\ref{phi(x)}) as $\phi(x) \approx \psi(x)B(x) + \eta\left(1-B(x)\right)$, where $B(x)$ is a smooth bump function on $\mathcal{X}_s$.}: 
\begin{equation}\label{phi(x)}
    \phi\left({x}\right)\coloneqq\psi\left({x}\right)\cdot \mathds{1} _{{x}\in \mathcal{X}_{s}}+\eta\cdot\mathds{1} _{{x}\in \partial\mathcal{X}_{s}}.
\end{equation}
Combining (\ref{C}), (\ref{Chat_old}), (\ref{pfail2}), and (\ref{phi(x)}), we obtain
\begin{equation}\label{Chat}
    \widehat{C}\!\left(x_0,\!t_0,\! {u}(\cdot)\!\right)\!=\!\mathbb{E}_{x_0, t_0}\!\!\left[\!\phi\!\left(\pmb{x}(\pmb{t}_f)\!\right)\!+\!\!\!\int_{t_0}^{\pmb{t}_f}\!\!\!\left(\!\frac{1}{2}\pmb{u}^T\!R\pmb{u}\!+\!V\!\!\right)\!dt\!\right]\!.
\end{equation}
Now, the risk-minimizing SOC problem is formulated as follows:
\begin{problem} [Risk-minimizing SOC problem]\label{Problem: Risk-minimizing SOC problem}
\begin{equation}\label{risk-minimizing SOC problem (tf)}
\begin{aligned}
\argmin_{{u}(\cdot)} \;\; & \mathbb{E}_{x_0, t_0}\!\!\left[\phi\left(\pmb{x}(\pmb{t}_f)\right)\!+\int_{t_0}^{\pmb{t}_f}\left(\frac{1}{2}\pmb{u}^TR\pmb{u}+V\right)dt\right]\\
\textrm{s.t.} \;\; & d\pmb{x}={f}dt+{g}\,{u}dt+{\sigma}d\pmb{w},\quad \pmb{x}(t_0)=x_0.
\end{aligned}    
\end{equation}
\end{problem}
Note that the cost function of Problem \ref{Problem: Risk-minimizing SOC problem} possesses the time-additive Bellman structure for the application of dynamic programming, without requiring any conservative approximations. 

\subsection{Risk-Minimizing Control Policy}\label{Section: Risk-Minimizing Control Policy}
In order to solve Problem \ref{Problem: Risk-minimizing SOC problem} via dynamic programming, for each $(x,t)\in\overline{\mathcal{Q}}$, and an admissible policy $u(\cdot)$ over $[t, T)$, we define the cost-to-go function:
\begin{equation}\label{value function}
    \widehat{C}\!\left(x, t, {u}(\cdot)\right)\!=\mathbb{E}_{x, t}\!\left[\phi\left(\pmb{x}(\pmb{t}_f)\right)\!+\!\!\int_{t}^{\pmb{t}_f}\!\!\!\left(\!\frac{1}{2}\pmb{u}^T\!R\pmb{u}+\!V\!\!\right)\!dt\right]\!\!.
\end{equation}
The following result is obtained by applying the verification theorem \cite[Chapter IV]{fleming2006controlled} to Problem \ref{Problem: Risk-minimizing SOC problem}.  

\begin{theorem}\label{solution to risk-minimizing soc}
Suppose there exists a continuous function $J:\overline{\mathcal{Q}}\rightarrow \mathbb{R}$ such that 
\begin{enumerate}[a)]
    \item $J(x,t)$ is continuously differentiable in $t$ and twice continuously differentiable in $x$ in the domain $\mathcal{Q}$;
    \item $J(x,t)$ solves the following dynamic programming PDE (HJB PDE):
    \begin{equation}\label{HJB PDE}
  \!\!\!\!\!\!\!\begin{cases}
     \begin{aligned}
         \!\!-\partial_tJ\!=&\!-\frac{1}{2}\!\left(\partial_xJ\right)^T\!\!gR^{-1}\!g^T\!\partial_xJ\!+\!V\\
         &+\!\!f^T\!\partial_xJ+\frac{1}{2}\text{Tr}\left(\sigma\sigma^T\partial^2_xJ\right),
          \end{aligned} &  (x,t)\in\mathcal{Q}, \\
    \!\!J(x,t)=\phi(x), & (x,t)\in\partial\mathcal{Q}.
  \end{cases}
    \end{equation}
\end{enumerate}
Then, the following statements hold:
\begin{enumerate}[1)]
\item $J(x,t)$ is the \textit{value function} for Problem \ref{Problem: Risk-minimizing SOC problem}. That is,
\begin{equation}\label{J as value function}
    J\left(x, t\right) = \min_{{u}(\cdot)} \widehat{C}\left(x, t, {u}(\cdot)\right),\quad \forall\;(x,t)\in\overline{\mathcal{Q}}.
\end{equation}
\item The solution to Problem \ref{Problem: Risk-minimizing SOC problem} is given by 
\begin{equation}\label{optimal policy}
    u^*(x,t)=-R^{-1}\left(x, t\right){g}^T\left(x, t\right)\partial_xJ\left(x, t\right).
\end{equation}
\end{enumerate}
\end{theorem}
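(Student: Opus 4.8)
The plan is to run the standard verification argument. I take the candidate $J$ supplied by the hypotheses, apply It\^o's formula to $J(\pmb{x}(t),t)$ along an arbitrary admissible trajectory up to the exit time $\pmb{t}_f$, and then use the HJB PDE (\ref{HJB PDE}) to re-express the result as the cost $\widehat{C}$ minus a nonnegative quadratic penalty in the control. The key structural fact is that $\pmb{t}_f$ lands on $\partial\mathcal{Q}$ by construction, so the terminal value $J(\pmb{x}(\pmb{t}_f),\pmb{t}_f)$ collapses to $\phi(\pmb{x}(\pmb{t}_f))$ through the boundary condition, which is exactly the terminal term in $\widehat{C}$.

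Concretely, for a fixed admissible $u(\cdot)$ and any $(x,t)\in\overline{\mathcal{Q}}$, I would first write the It\^o expansion $dJ = \big(\partial_t J + (f+gu)^T \partial_x J + \tfrac12 \text{Tr}(\sigma\sigma^T \partial^2_x J)\big)\,dt + (\partial_x J)^T \sigma\, d\pmb{w}$, which is valid on the interior $\mathcal{Q}$ by assumption (a). Integrating from $t$ to $\pmb{t}_f$ and taking $\mathbb{E}_{x,t}[\cdot]$ eliminates the stochastic (martingale) integral, giving $\mathbb{E}_{x,t}[\phi(\pmb{x}(\pmb{t}_f))] - J(x,t) = \mathbb{E}_{x,t}\big[\int_t^{\pmb{t}_f}(\partial_t J + (f+gu)^T\partial_x J + \tfrac12\text{Tr}(\sigma\sigma^T\partial^2_x J))\,dt\big]$. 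Substituting the HJB relation (b) to replace $\partial_t J + f^T\partial_x J + \tfrac12\text{Tr}(\sigma\sigma^T\partial^2_x J)$ by $\tfrac12(\partial_x J)^T g R^{-1} g^T\partial_x J - V$, and rearranging against the definition of $\widehat{C}$, I expect to obtain $\widehat{C}(x,t,u(\cdot)) - J(x,t) = \mathbb{E}_{x,t}\big[\int_t^{\pmb{t}_f} \tfrac12(u+R^{-1}g^T\partial_x J)^T R\,(u+R^{-1}g^T\partial_x J)\,dt\big]$ after completing the square. Since $R\succ 0$ the integrand is nonnegative, so $J(x,t)\le \widehat{C}(x,t,u(\cdot))$ for every admissible $u$, with equality exactly when $u=-R^{-1}g^T\partial_x J$. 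This simultaneously proves (\ref{J as value function}) and identifies the minimizer (\ref{optimal policy}).

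The main obstacle is the rigorous justification of Dynkin's formula up to the stopping time $\pmb{t}_f$, since $J$ is only $C^{1,2}$ on the open set $\mathcal{Q}$ and merely continuous up to $\partial\mathcal{Q}$, and the stochastic integral is a priori only a local martingale. I would handle this by localization: introduce stopping times $\pmb{t}_f\wedge\tau_n$ with $\tau_n$ exhausting $\mathcal{Q}$ from within, apply It\^o where the required smoothness holds, verify that the stopped stochastic integral is a genuine martingale with zero mean, and then pass $n\to\infty$. Here the compactness of $\overline{\mathcal{Q}}=\overline{\mathcal{X}_s}\times[t_0,T]$, the continuity (hence boundedness) of $J$, and the standing regularity assumptions on $f,g,\sigma$ provide the uniform integrability and dominated-convergence bounds needed to interchange limit and expectation and to recover $\phi(\pmb{x}(\pmb{t}_f))$ in the limit. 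The remaining manipulations, namely completing the square and invoking positive definiteness of $R$, are routine.
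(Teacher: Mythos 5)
Your proposal is correct and follows essentially the same verification-theorem argument as the paper: It\^o's formula integrated up to the exit time $\pmb{t}_f$, the boundary condition to replace $J\left(\pmb{x}(\pmb{t}_f),\pmb{t}_f\right)$ by $\phi\left(\pmb{x}(\pmb{t}_f)\right)$, and the quadratic structure in $u$ to conclude $J\le\widehat{C}$ with equality iff $u=-R^{-1}g^T\partial_xJ$. The only differences are cosmetic: you complete the square to obtain an exact identity for the gap $\widehat{C}-J$ where the paper instead writes the PDE's right-hand side as a pointwise minimum over $u$ and invokes the resulting inequality, and you additionally spell out the localization argument justifying Dynkin's formula up to the stopping time, a technical point the paper leaves implicit.
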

\begin{proof}
Let $J(x,t)$ be the function satisfying a) and b). By the fundamental theorem of calculus and the It\^{o} formula, for each $(x,t)$ we have
\begin{equation}\label{ito}
    \begin{aligned}
       \mathbb{E}_{x,t}\left[J\left(\pmb{x}(\pmb{t}_f), \pmb{t}_f\right)\right]&=J(x,t)\\ &\!\!\!\!\!\!\!\!\!\!\!\!\!\!\!\!\!\!\!\!\!\!\!\!\!\!\!\!\!\!\!\!\!\!\!\!\!\!\!\!\!\!\!\!\!\!\!\!\!\!\!+\mathbb{E}_{x,t}\!\!\left[\int_{t}^{\pmb{t}_f}\!\!\!\left(\!\!\partial_tJ \!+\! (f\!+\!gu)^T\!(\partial_xJ)\!+\!\frac{1}{2}\text{Tr}\!\left(\!\sigma\sigma^T\!\partial_x^2J\right)\!\!\right)\!ds\right]\!.
    \end{aligned}
\end{equation}
By the boundary condition of the PDE (\ref{HJB PDE}), $\mathbb{E}_{x,t}\left[J\left(\pmb{x}(\pmb{t}_f), \pmb{t}_f\right)\right] = \mathbb{E}_{x,t}\left[\phi\left(\pmb{x}(\pmb{t}_f)\right)\right]$. Hence, from (\ref{ito}), we obtain
\begin{equation}\label{ito after plugging BC}
    \begin{aligned}
       J(x,t)&=\mathbb{E}_{x,t}\left[\phi\left(\pmb{x}(\pmb{t}_f)\right)\right]\\ &\!\!\!\!\!\!\!\!\!\!\!\!\!\!\!\!\!\!\!\!\!\!-\mathbb{E}_{x,t}\!\!\left[\int_{t}^{\pmb{t}_f}\!\!\!\left(\!\!\partial_tJ \!+\! (f\!+\!gu)^T\!(\partial_xJ)\!+\!\frac{1}{2}\text{Tr}\!\left(\!\sigma\sigma^T\!\partial_x^2J\right)\!\!\right)\!ds\right]\!.
    \end{aligned}
\end{equation}
Now, notice that the right hand side of the PDE from (\ref{HJB PDE}) can be expressed as the minimum value of a quadratic form in $u$ as follows:
\begin{equation*}
    \begin{aligned}
         \!-\partial_tJ\!=\!\min_{{u}}\!\left[\frac{1}{2}u^T\!Ru\!+\!V\!+\!\left(\!f\!+\!gu\right)^T\!\partial_xJ\!
         +\!\frac{1}{2}\text{Tr}\!\left(\!\sigma\sigma^T\partial^2_xJ\right)\!\right]\!.
          \end{aligned} 
\end{equation*}
Therefore, for an arbitrary $u$, we have 
\begin{equation}\label{pde ineqaulity}
    \begin{aligned}
         \!-\partial_tJ\!\leq\frac{1}{2}u^T\!Ru\!+\!V\!+\!\left(f\!+\!gu\right)^T\!\partial_xJ\!
         +\!\frac{1}{2}\text{Tr}\left(\sigma\sigma^T\partial^2_xJ\right)
          \end{aligned} 
\end{equation}
where the equality holds iff
\begin{equation}\label{optimal policy2}
    u = -R^{-1}g^T\partial_xJ.
\end{equation}
Combining (\ref{ito after plugging BC}) and (\ref{pde ineqaulity}), we obtain
\begin{equation}\label{J leq C_hat}
  \begin{aligned}
       \!J(x,t)&\leq\mathbb{E}_{x,t}\left[\phi\left(\pmb{x}(\pmb{t}_f)\right)\right]+\mathbb{E}_{x,t}\!\left[\!\int_{t}^{\pmb{t}_f}\!\!\!\left(\!\frac{1}{2}\pmb{u}^T\!R\pmb{u}\!+\!V\!\!\right)\!\!ds\!\right]\\
       &= \widehat{C}\left(x,t,{u}(\cdot)\right).
    \end{aligned}   
\end{equation}
This proves the statement 1). Since (\ref{J leq C_hat}) holds with equality iff (\ref{optimal policy2}) is satisfied, the statement 2) also follows. 
\end{proof}
General conditions under which there exists a function $J(x,t)$ satisfying a) and b) are beyond the scope of this paper. However, in Section \ref{Section: Numerical Methods} below, we focus on a special case in which (\ref{HJB PDE}) can be linearized, where the existence of such a function is guaranteed. Since the boundary condition of the HJB PDE (\ref{HJB PDE2}) is characterized by the Lagrange multiplier of Problem \ref{Problem: Risk-minimizing SOC problem}, Theorem \ref{solution to risk-minimizing soc} implies that the safety of an optimal policy of Problem \ref{Problem: Risk-minimizing SOC problem} can be tuned through the boundary condition of the HJB PDE.  

\section{Risk Estimation}\label{Sec: Risk Analysis}
In this section, we show that the proposed risk-minimizing control problem can be viewed as a generalization of the risk estimation problem. The risk estimation problem can be stated as follows:
\begin{problem}[Risk estimation]\label{Problem: Risk estimation}
Find the probability of failure ($P_{fail}$) defined as (\ref{pfail2}), for the system (\ref{SDE}), given an admissible control policy ${u}\left(\cdot\right)$.
\end{problem}
Let us plug the following values 
\begin{equation}\label{replacement}
    \psi(x)\equiv0, \;\; R(x,t)\equiv0, \;\; V(x,t)\equiv0, \;\; \eta=1
\end{equation}
in (\ref{phi(x)}) and (\ref{Chat}), and define new cost functions\footnote{As mentioned in Section \ref{Section: Risk-Minimizing SOC problem}, similar to $\phi(x)$, we need assumptions on the regularity of $\widetilde{\phi}(x)$ in the sequel. These can be met by approximating $\widetilde{\phi}(x)$ with the help of a smooth bump function.}:
\begin{equation*}
\!\!\widetilde{\phi}(x) \!=\! \mathds{1} _{{x}\in \partial\mathcal{X}_{s}}, \quad \widetilde{C}\left(x_0, t_0, u(\cdot)\right) \!=\! \mathbb{E}_{x_0, t_0}\!\!\left[\mathds{1} _{\pmb{x}(\pmb{t}_f)\in \partial\mathcal{X}_{s}}\right].
\end{equation*}
From (\ref{pfail2}), we recognize that 
\begin{equation}\label{Pfail as Ctilde}
    \widetilde{C}\left(x_0, t_0, u(\cdot)\right)=P_{fail}.
\end{equation}
Hence, computing $\widetilde{C}$ for a given admissible control policy $u(\cdot)$ is in fact the risk estimation problem. For this purpose, we define the cost-to-go function for each $(x,t)\in\overline{\mathcal{Q}}$:
\begin{equation}\label{value function for risk}
\widetilde{C}\left(x, t, u(\cdot)\right) = \mathbb{E}_{x, t}\left[\mathds{1} _{\pmb{x}(\pmb{t}_f)\in \partial\mathcal{X}_{s}}\right].
\end{equation}
Now, we show that Problem \ref{Problem: Risk estimation} can be solved via a PDE which is a special case of the HJB PDE (\ref{HJB PDE}). Corollary \ref{Theorem: risk estimation} follows from Theorem \ref{solution to risk-minimizing soc}. 
\begin{corollary}\label{Theorem: risk estimation}
Suppose there exists a continuous function $J:\overline{\mathcal{Q}}\rightarrow \mathbb{R}$ that satisfies a), and solves the following PDE:
      \begin{equation}\label{risk PDE}
      \begin{cases}
 \begin{aligned}
    \!\!-\partial_{t}J\!=\!\left(\!f\!+\!gu\right)^T\!\!\partial_xJ\!+\!\frac{1}{2}\text{Tr}\!\left(\!\sigma\sigma^T\!\partial^2_xJ\right)\!,
    \end{aligned}& \!\!(x,t)\!\in\!\mathcal{Q}, \\
    \!\!J(x,t)= \widetilde{\phi}(x), & \!\!(x,t)\!\in\!\partial\mathcal{Q}.
    \end{cases}
          \end{equation}
Then, the solution to Problem \ref{Problem: Risk estimation} is given by
\begin{equation*}
    P_{fail} = J(x_0,t_0).
\end{equation*}
\end{corollary}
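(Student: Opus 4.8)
The plan is to reuse the verification identity already established inside the proof of Theorem~\ref{solution to risk-minimizing soc} and to exploit the fact that here the feedback $u(\cdot)$ is \emph{prescribed} rather than optimized, so that no minimization over $u$ is needed. First I would note that the It\^{o}/Dynkin identity (\ref{ito}) was derived for an arbitrary admissible control driving (\ref{SDE}); it therefore applies verbatim to the given policy and equates $\mathbb{E}_{x,t}\!\left[J(\pmb{x}(\pmb{t}_f),\pmb{t}_f)\right]$ with $J(x,t)$ plus the expectation of $\int_t^{\pmb{t}_f}\!\big(\partial_tJ+(f+gu)^T\partial_xJ+\tfrac12\text{Tr}(\sigma\sigma^T\partial_x^2J)\big)\,ds$. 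The crucial observation is that the PDE (\ref{risk PDE}) is exactly the statement that this integrand vanishes identically on $\mathcal{Q}$; hence the integral term is zero and the identity collapses to $\mathbb{E}_{x,t}[J(\pmb{x}(\pmb{t}_f),\pmb{t}_f)]=J(x,t)$.

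Next I would invoke the boundary data. Because the exit time is constructed so that $(\pmb{x}(\pmb{t}_f),\pmb{t}_f)\in\partial\mathcal{Q}$, the boundary condition $J=\widetilde{\phi}$ on $\partial\mathcal{Q}$ lets me substitute $J(\pmb{x}(\pmb{t}_f),\pmb{t}_f)=\widetilde{\phi}(\pmb{x}(\pmb{t}_f))=\mathds{1}_{\pmb{x}(\pmb{t}_f)\in\partial\mathcal{X}_s}$. Combining this with the previous step gives
\begin{equation*}
J(x,t)=\mathbb{E}_{x,t}\!\left[\mathds{1}_{\pmb{x}(\pmb{t}_f)\in\partial\mathcal{X}_s}\right]=\widetilde{C}\!\left(x,t,u(\cdot)\right),
\end{equation*}
where the last equality is the definition (\ref{value function for risk}). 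Evaluating at $(x,t)=(x_0,t_0)$ and using (\ref{Pfail as Ctilde}) then yields $J(x_0,t_0)=\widetilde{C}(x_0,t_0,u(\cdot))=P_{fail}$, which is the claim.

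This makes precise the sense in which the corollary \emph{follows from} Theorem~\ref{solution to risk-minimizing soc}: it is the degenerate case obtained by setting $\psi\equiv0$, $V\equiv0$, $\eta=1$ and dropping the control penalty, so that the quadratic-in-$u$ completion producing the Hamiltonian term $-\tfrac12(\partial_xJ)^TgR^{-1}g^T\partial_xJ$ of (\ref{HJB PDE}) is absent. Because $u$ is fixed, there is nothing to minimize, and the inequality (\ref{pde ineqaulity}) used in the proof of Theorem~\ref{solution to risk-minimizing soc} is replaced by the exact equality (\ref{risk PDE}) for the prescribed feedback, which is precisely the Kolmogorov-backward/Feynman--Kac equation for the hitting probability of $\partial\mathcal{X}_s$.

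I expect the only genuine difficulty to lie in the stochastic-calculus justification rather than the algebra. Applying It\^{o}'s formula up to the random exit time $\pmb{t}_f$ requires $J\in C^{1,2}(\mathcal{Q})\cap C(\overline{\mathcal{Q}})$ (hypothesis a)), finiteness of $\pmb{t}_f$ (immediate since $\pmb{t}_f\le T$), and that the stochastic integral against $d\pmb{w}$ be a true martingale so that its expectation vanishes in (\ref{ito}); this is the step I would discharge by a localization/optional-stopping argument together with the boundedness of $\sigma^T\partial_xJ$ on the compact set $\overline{\mathcal{Q}}$. A secondary subtlety is that the boundary datum $\widetilde{\phi}=\mathds{1}_{x\in\partial\mathcal{X}_s}$ is discontinuous, so the existence of a qualifying $J$ is understood through the smooth bump-function approximation flagged in the footnote followed by a limiting argument; once such a $J$ is available, the verification chain above goes through unchanged.
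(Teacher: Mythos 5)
Your proof is correct and rests on the same underlying verification machinery as the paper's, but it is packaged in a genuinely different way. The paper's proof treats Theorem \ref{solution to risk-minimizing soc} as a black box: it substitutes the degenerate values (\ref{replacement}) into the min-form HJB PDE (\ref{HJB PDE2}), declares that no minimization over $u$ is performed because the policy is prescribed, so that (\ref{HJB PDE2}) collapses to (\ref{risk PDE}) and the value-function identity (\ref{J as value function}) becomes $J(x,t)=\widetilde{C}\left(x,t,u(\cdot)\right)$; then (\ref{Pfail as Ctilde}) gives $P_{fail}=J(x_0,t_0)$. You instead re-run the core of the theorem's proof: the Dynkin/It\^{o} identity (\ref{ito}) applied to the prescribed policy, whose integrand vanishes identically on $\mathcal{Q}$ by (\ref{risk PDE}), combined with the boundary condition $J=\widetilde{\phi}$ on $\partial\mathcal{Q}$, yields $J(x,t)=\mathbb{E}_{x,t}\left[\mathds{1}_{\pmb{x}(\pmb{t}_f)\in\partial\mathcal{X}_s}\right]=\widetilde{C}\left(x,t,u(\cdot)\right)$ directly. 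Your route is cleaner on a point the paper glosses over: taken literally, $R\equiv 0$ is incompatible with the explicit HJB form (\ref{HJB PDE}) (which contains $R^{-1}$), and the unconstrained minimum over $u$ of the now linear-in-$u$ Hamiltonian in (\ref{HJB PDE2}) would be $-\infty$; the paper patches this by fiat (``we do not perform minimization over $u$''), i.e., by implicitly shrinking the admissible class to the single given policy, whereas your direct verification never invokes a minimization step at all. What the paper's packaging buys is the structural message that risk estimation is literally a degenerate instance of the risk-minimizing problem --- a point you also capture in your closing remarks --- and your final paragraph on the martingale/localization issue and the discontinuous boundary datum $\widetilde{\phi}$ addresses precisely the regularity caveats that the paper relegates to its footnotes.
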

\begin{proof}
Let $J(x,t)$ be a function that satisfies a) and solves the HJB PDE: 
\begin{equation}\label{HJB PDE2}
\begin{cases}
\begin{aligned}
         -\partial_tJ\!=\!&\min_{{u}}\!\!\Bigg[\!\frac{1}{2}u^T\!Ru\!+\!\left(f\!\!+\!gu\right)^T\!\!\partial_xJ\\
         &+V+\frac{1}{2}\text{Tr}\left(\sigma\sigma^T\partial^2_xJ\right)\Bigg],
          \end{aligned} &  (x,t)\in\mathcal{Q},\\
    J(x,t)=\phi(x), & (x,t)\in\partial\mathcal{Q}. 
  \end{cases}
\end{equation}
Then from Theorem \ref{solution to risk-minimizing soc}, $J(x,t)$ satisfies (\ref{J as value function}) for the cost function $\widehat{C}$ defined in (\ref{Chat}). Now, using the values from (\ref{replacement}), the HJB PDE (\ref{HJB PDE2}) becomes (\ref{risk PDE}), and 
\begin{equation}\label{J(x,t)}
    J(x,t)=\widetilde{C}\left(x, t, u(\cdot)\right).
\end{equation}
Note that, here we do not perform minimization over $u$, since the control policy is already known to us. Hence, if $J(x,t)$ satisfies (\ref{risk PDE}), from (\ref{J(x,t)}), and (\ref{Pfail as Ctilde}), we obtain $P_{fail}=J(x_0, t_0)$.   
\end{proof}
 Corollary \ref{Theorem: risk estimation} implies that Problem \ref{Problem: Risk-minimizing SOC problem} can be viewed as a generalization of Problem \ref{Problem: Risk estimation}, and PDE (\ref{risk PDE}) is a special case of the HJB PDE (\ref{HJB PDE}). Note that (\ref{HJB PDE}) is nonlinear in $J(x,t)$, whereas (\ref{risk PDE}) is linear. PDE (\ref{risk PDE}) is also a special case of the Dirichlet-Poisson problem \cite[Chapter 9]{oksendal2013stochastic}, where the existence and the uniqueness of a solution can be guaranteed under a sufficiently regular boundary condition. Corollary \ref{Theorem: risk estimation} is consistent with the results obtained in \cite[Theorem 1]{chern2021safe} for the quantification of safety.  

\section{Numerical Methods}\label{Section: Numerical Methods}
It is necessary to resort to numerical methods to solve the HJB PDE (\ref{HJB PDE}) approximately since an analytical solution is in general not available. In this section, we restrict our attention to a class of SOC problems for which the associated HJB equation can be linearized and consider two numerical methods, namely, the path integral control and FDM, to solve the linearized PDE. Consider the class of SOC problem for which there exists a positive constant $\lambda$ satisfying the following equation: 
\begin{equation}\label{lambda}
  \sigma(x, t)\sigma^T(x, t) = \lambda g(x, t)R^{-1}(x,t) g^T(x, t). 
\end{equation}
This condition implies that the control input in the directions with a higher noise variance is cheaper than that in the direction with a lower noise variance. See \cite{kappen2005path} for further discussion on this condition. Using the constant $\lambda$ satisfying (\ref{lambda}), introduce the Cole-Hopf transformation
\begin{equation}\label{exp transformation}
 J(x,t) = -\lambda\,\text{log}\left(\xi\left(x,t\right)\right).
\end{equation}
The transformation (\ref{exp transformation}) along with the condition (\ref{lambda}) allows us to rewrite the PDE (\ref{HJB PDE}) as a linear PDE in terms of $\xi\left(x,t\right)$:
\begin{equation}\label{linearized risk-minimizing HJB}
 \begin{cases}
     \!\partial_t\xi\!=\!\frac{V\xi}{\lambda}\!-\!f^T\partial_x\xi-\frac{1}{2}\text{Tr}\left(\sigma\sigma^T\partial^2_x\xi\right),       & (x,t)\in\mathcal{Q}, \\
    \!\xi(x,t)\!=\!\text{exp}\left(-\frac{\phi(x)}{\lambda}\right), & (x,t)\in\partial\mathcal{Q}.\\  
  \end{cases}
\end{equation}
Similar to (\ref{risk PDE}), the PDE (\ref{linearized risk-minimizing HJB}) is also a special case of the Dirichlet-Poisson problem, where the existence and the uniqueness of a solution can be guaranteed under a sufficiently regular boundary condition.
\subsection{Path Integral}
\begin{figure*}[t]
    \centering
      \begin{tabular}{c c c c}
      \!\!\!\!\!\!\!\!\!\!\includegraphics[scale=0.3]{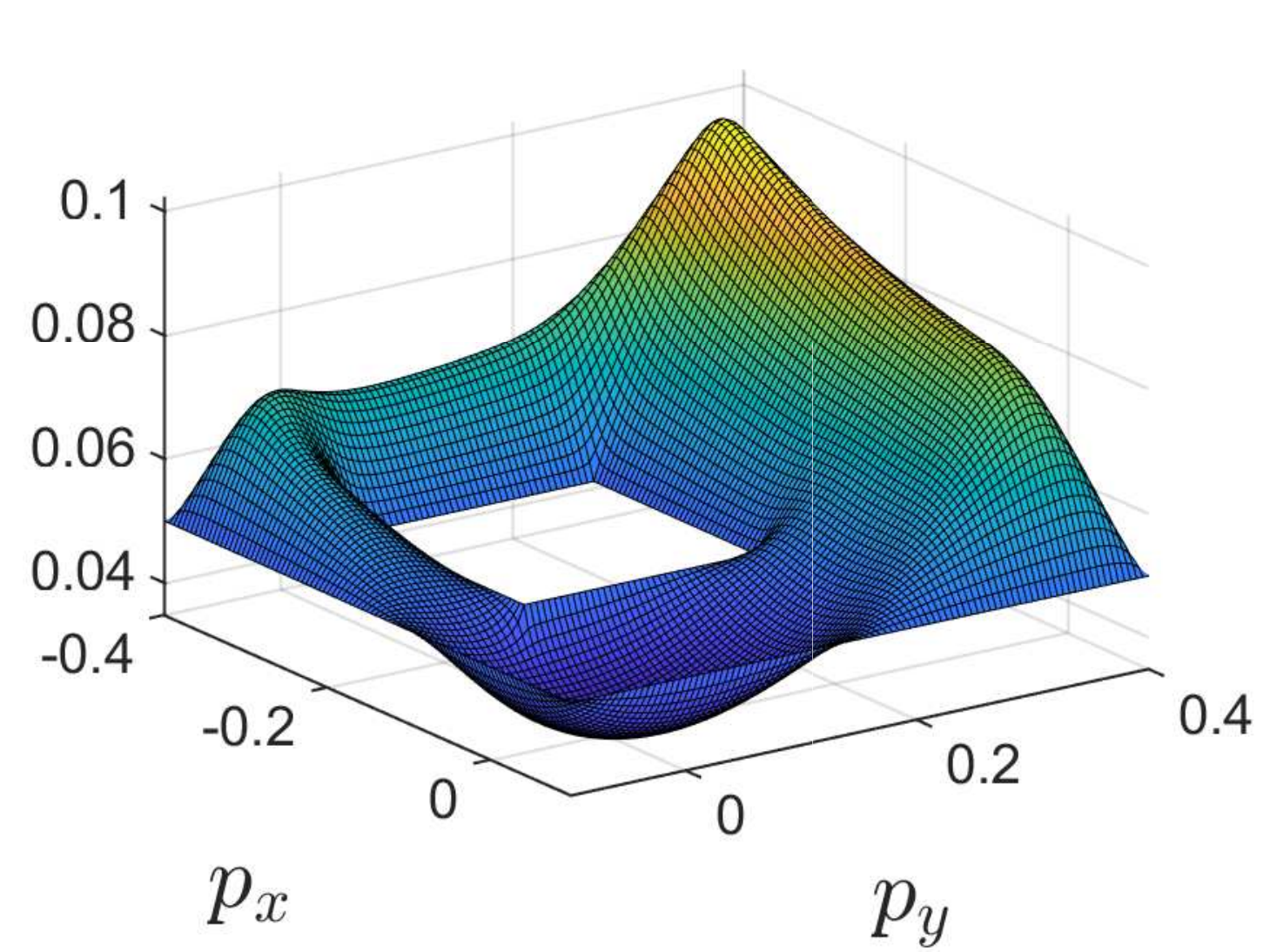} &\!\!\!\!\!\!\!\!\includegraphics[scale=0.3]{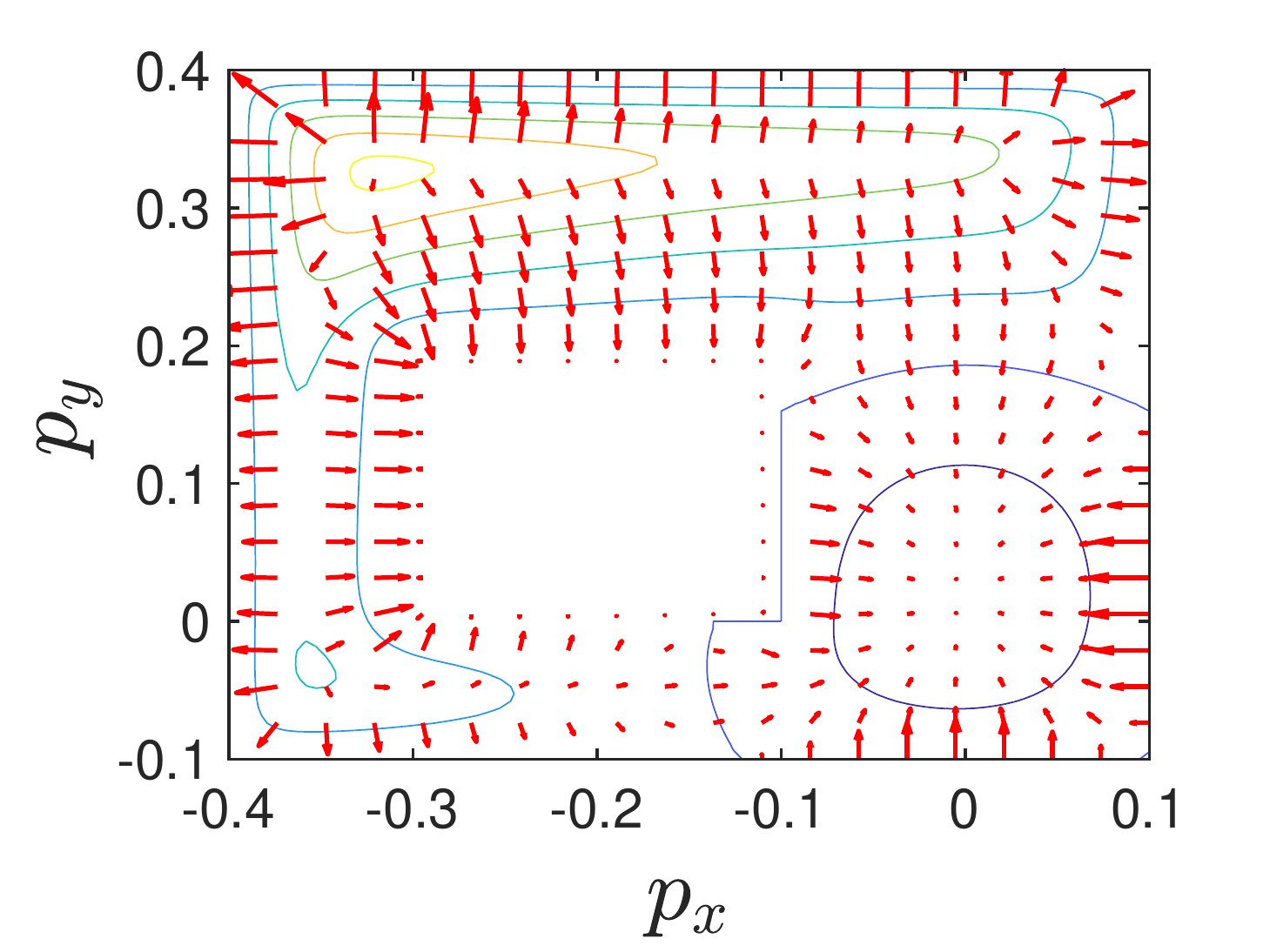} &\!\!\!\!\!\!\!\!\includegraphics[scale=0.3]{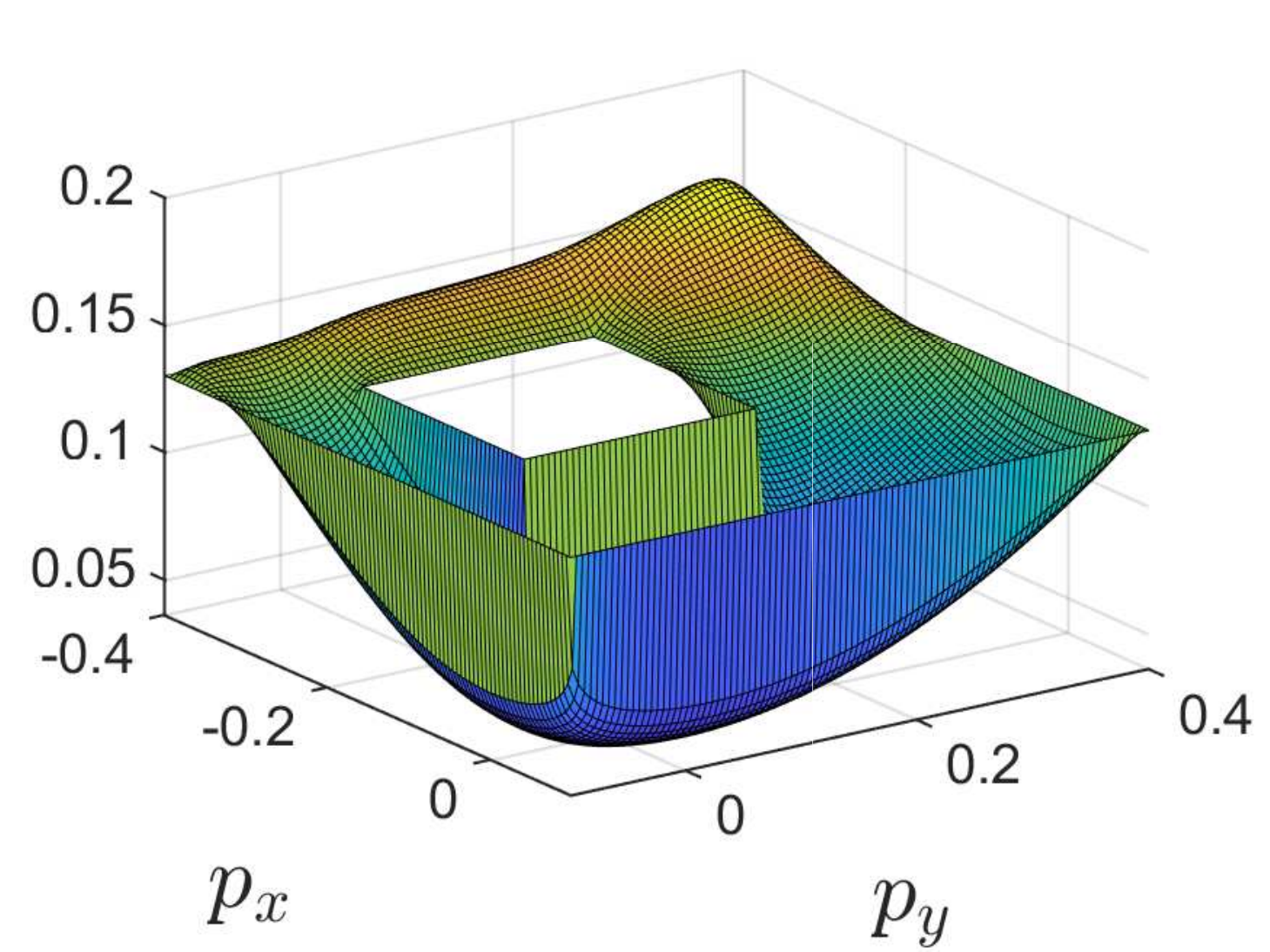} &\!\!\!\!\!\!\!\!\!\! \includegraphics[scale=0.3]{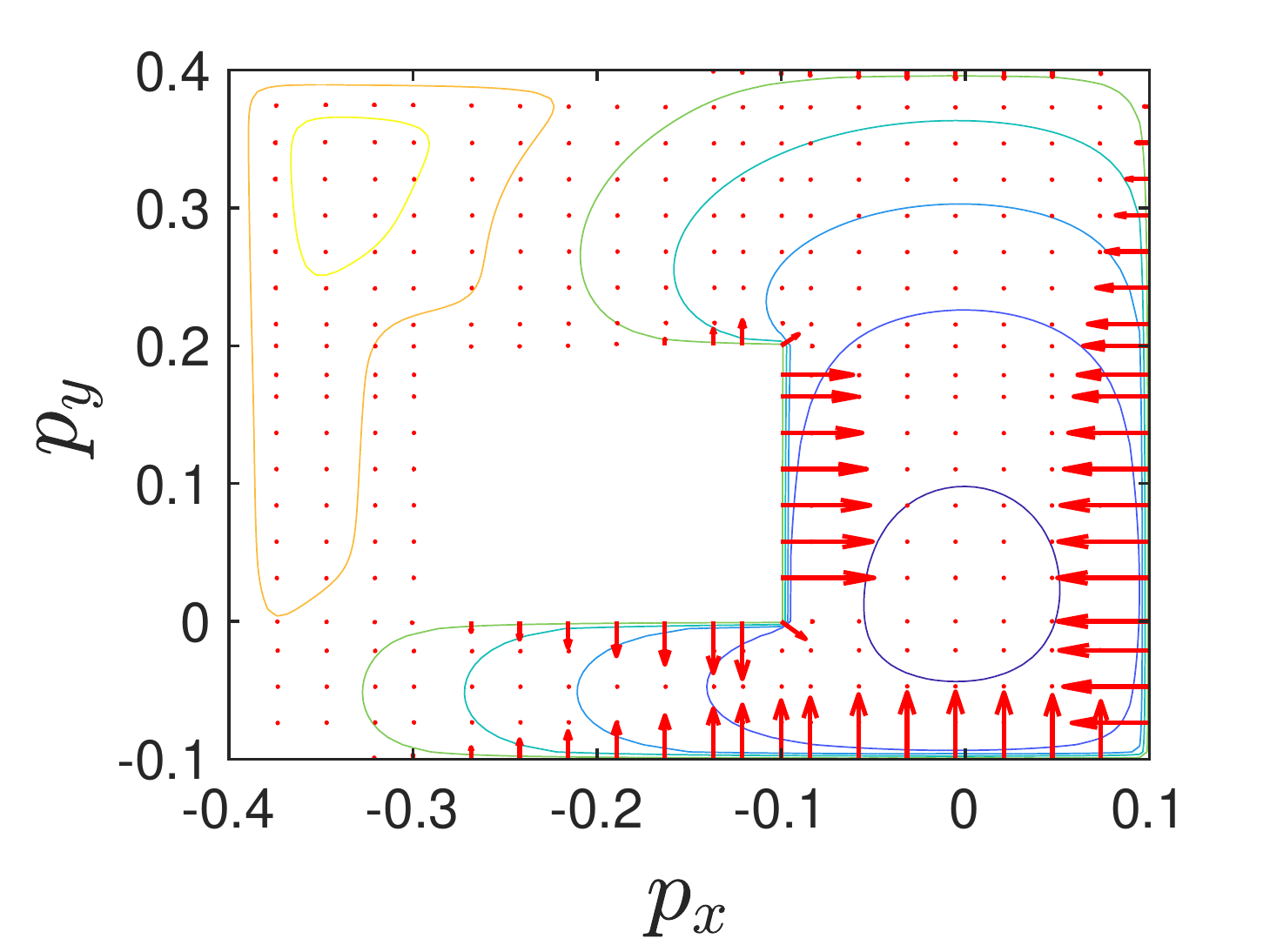}\\
      (a) $J(x,t_0)$ for $\eta = 0.05$ & (b) $u^*(x,t_0)$ for $\eta = 0.05$ & (c) $J(x,t_0)$ for $\eta = 0.13$ & (d) $u^*(x,t_0)$ for $\eta = 0.13$\\
      
        \!\!\!\!\!\!\!\!\!\!\includegraphics[scale=0.3]{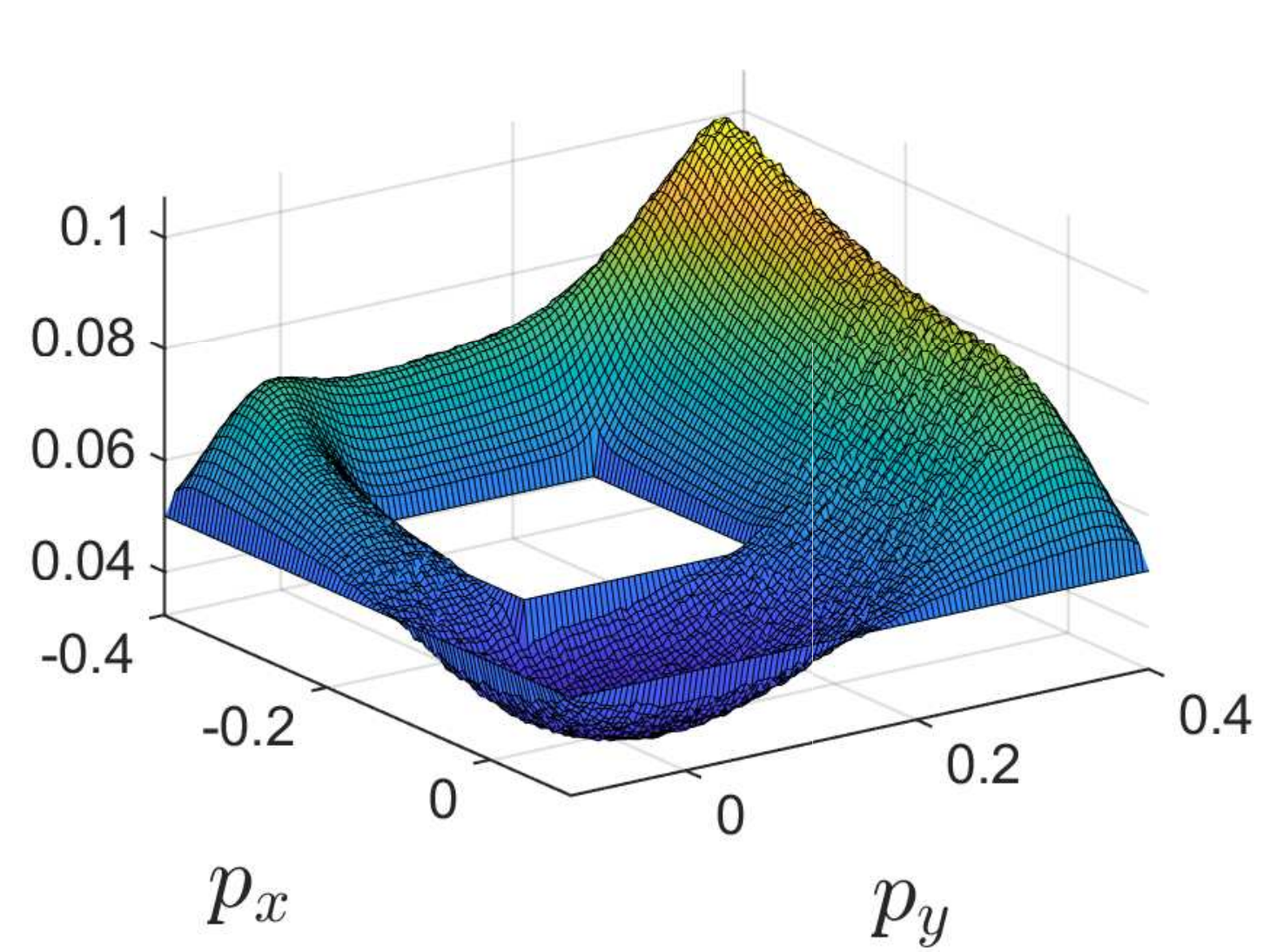} &\!\!\!\!\!\!\!\!\includegraphics[scale=0.3]{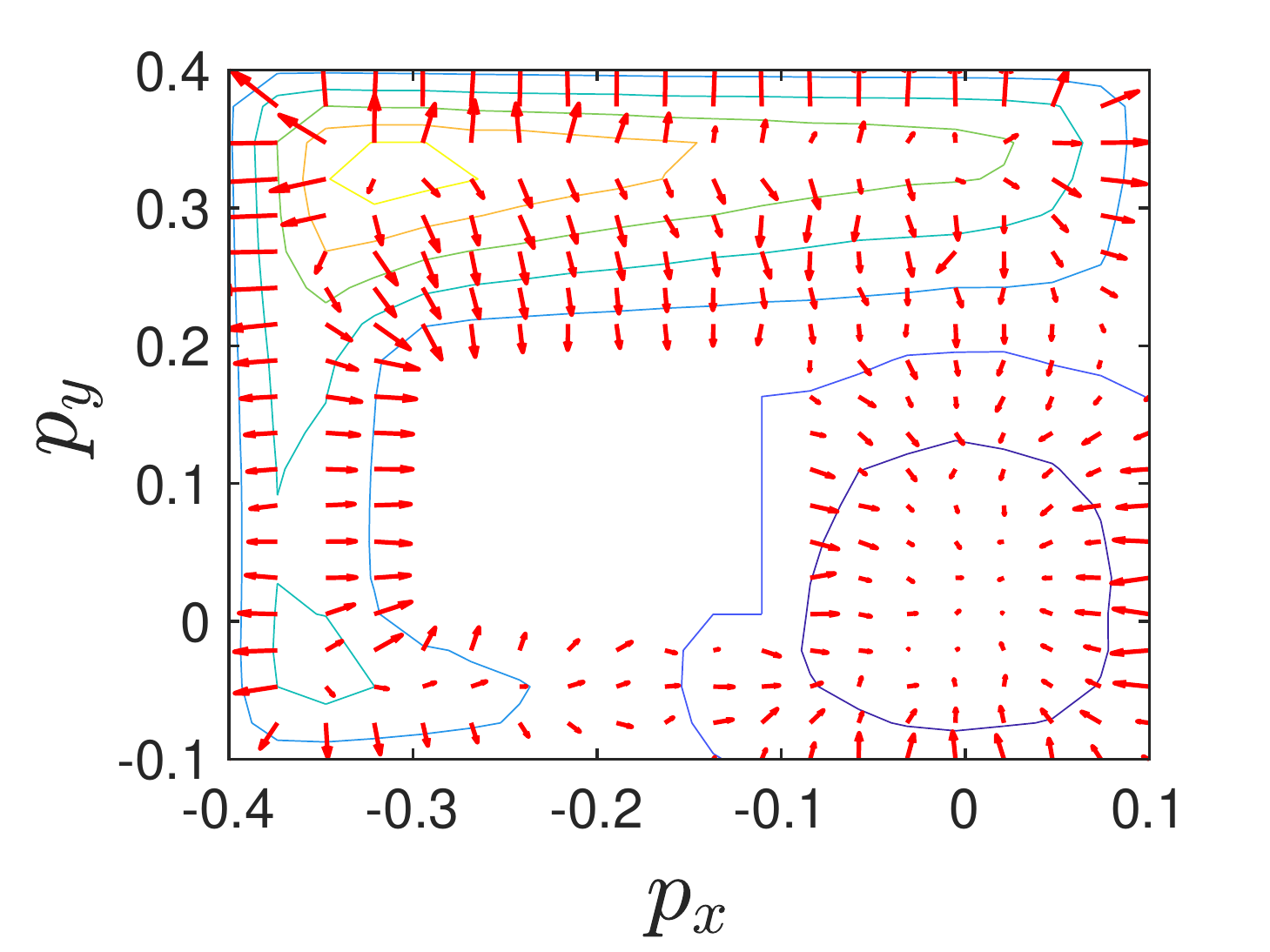}&\!\!\!\!\!\!\!\!\includegraphics[scale=0.3]{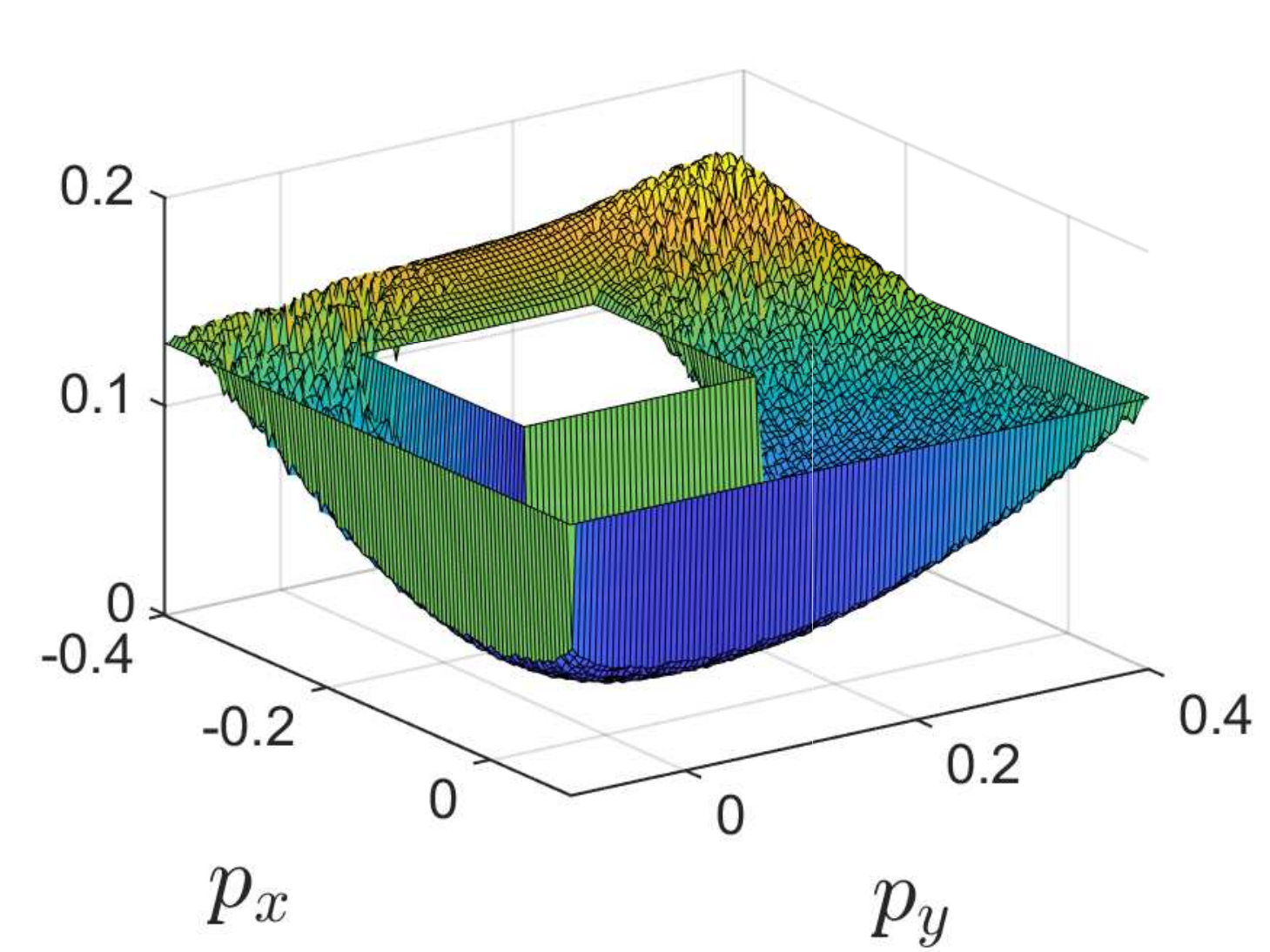} &\!\!\!\!\!\!\!\!\!\!\includegraphics[scale=0.3]{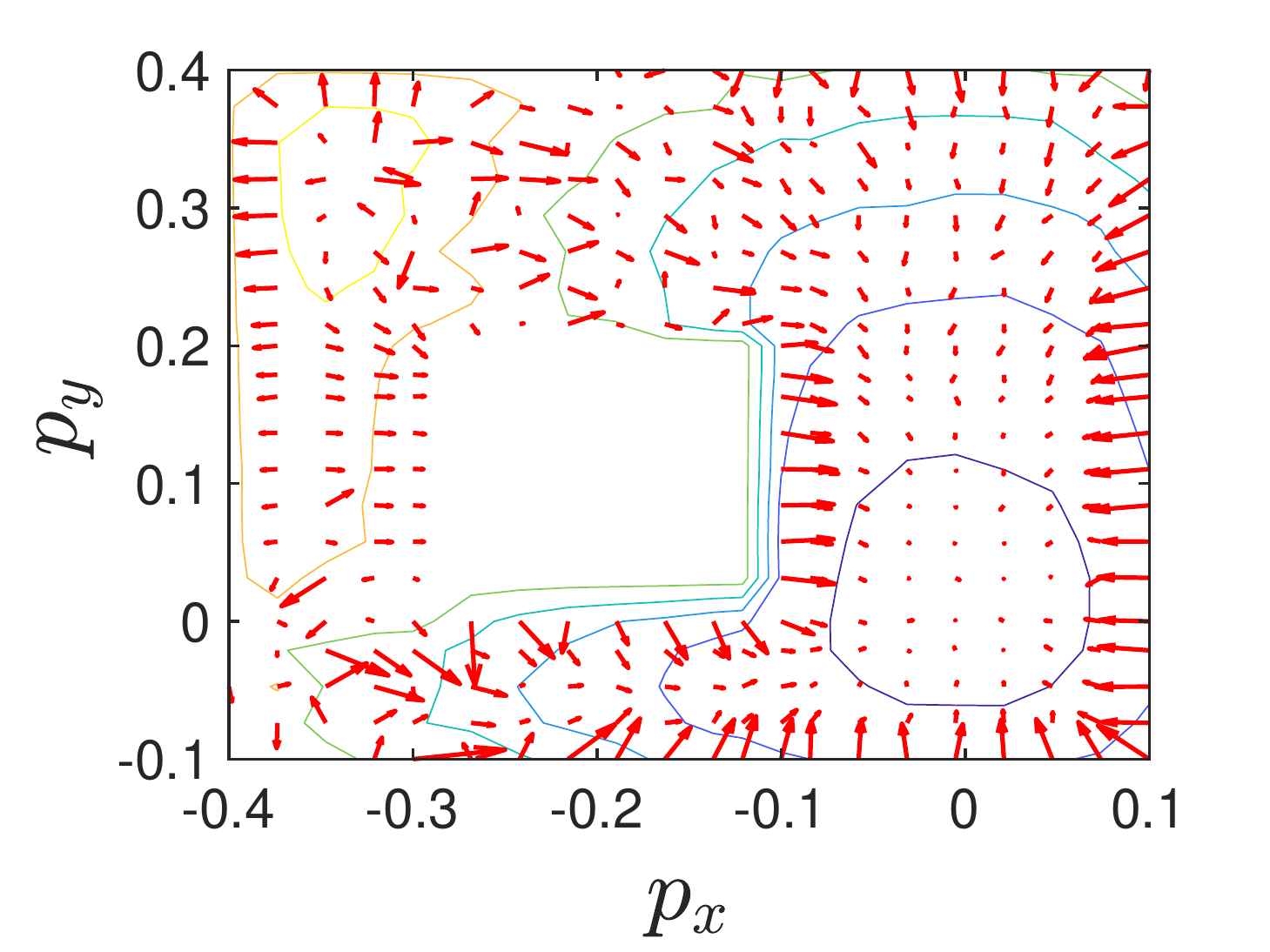}\\
        (e) $J(x,t_0)$ for $\eta = 0.05$ & (f) $u^*(x,t_0)$ for $\eta = 0.05$ & (g) $J(x,t_0)$ for $\eta = 0.13$ & (h) $u^*(x,t_0)$ for $\eta = 0.13$\\
       \end{tabular}
        \caption{Comparison of the solutions of $J(x,t_0)$ and $u^*(x,t_0)$ obtained from the FDM (a-d) and the path integral (e-h) for $\eta=0.05$ and $\eta=0.13$. The optimal control inputs $u^*(x,t_0)$ in (b, d, f, h) are plotted with the contours of $J(x, t_0)$.} 
        \label{Fig. surf plots FDM}
\end{figure*}
In the path integral control framework, we utilize the fact that the solution to the PDE (\ref{linearized risk-minimizing HJB}) admits the Feynman-Kac representation \cite{oksendal2013stochastic}, \cite{yong1997relations}:  
\begin{equation}\label{xi}
    \xi\left(x,t\right)=\mathbb{E}\left[\text{exp}\left(-\frac{1}{\lambda}S\left(\tau\right)\right)\right]
\end{equation}
where $S\left(\tau\right)=\phi\left(\pmb{x}(\pmb{t}_f)\right)+\int_{t}^{\pmb{t}_f} V\left(\pmb{x}(t), t\right)dt$ is the cost-to-go of an uncontrolled trajectory $\tau$ of the system (\ref{SDE}) starting at $(x,t)$. The optimal control input $u^*(x,t)$ can be obtained by taking the gradient of $\xi(x,t)$ with respect to $x$ \cite{williams2017model}, \cite{theodorou2010generalized}. First, let us partition the state vector into $\pmb{x}(t)=\begin{bmatrix}\pmb{x}_p(t) &\pmb{x}_c(t)\end{bmatrix}$ with $\pmb{x}_p(t)\in\mathbb{R}^l$ the non-directly controlled part, and $\pmb{x}_c(t)\in\mathbb{R}^{n-l}$ the directly controlled part. Subsequently, $g(\pmb{x}(t),t)$ and $\sigma(\pmb{x}(t),t)$ can be partitioned as $g(\pmb{x}(t),t)=\begin{bmatrix}0_{l\times m}&g_c(\pmb{x}(t),t) \end{bmatrix}^T$ and $\sigma(\pmb{x}(t),t)=\begin{bmatrix}{0_{l\times k}}&\sigma_c(\pmb{x}(t),t) \end{bmatrix}^T$, where $g_c(\pmb{x}(t),t)$ and $\sigma_c(\pmb{x}(t),t)$ correspond to the directly actuated states. After taking the gradient of $\xi(x,t)$ with respect to $x$, we obtain
\begin{equation}\label{path integral control}
 u^*(x,t)dt=\mathcal{G}\!\left(x,t\right)\!\frac{\mathbb{E}\!\left[\text{exp}\!\left(\!-\frac{1}{\lambda}S\left(\tau\right)\right)\sigma_c\left(x,t\right)d\pmb{w}(t)\right]}{\mathbb{E}\left[\text{exp}\left(-\frac{1}{\lambda}S\left(\tau\right)\right)\right]}   
\end{equation}
where the matrix $\mathcal{G}\left(x,t\right)$ is defined as
\begin{equation*}
\mathcal{G}\left(x,t\right)=R^{-1}(x,t)g_c^T(x,t)\left(g_c(x,t)R^{-1}(x,t)g_c^T(x,t)\right)^{-1}. 
\end{equation*}\par
For computation, we can discretize the system dynamics (\ref{SDE}) and use Monte Carlo sampling to evaluate the expectations in (\ref{xi}) and (\ref{path integral control}) numerically \cite{williams2017model}. The path integral framework evaluates a solution of the HJB equation locally without requiring the solution nearby so that there is no need for a (global) grid of the domain. However, for real-time control, the Monte Carlo simulation must be performed in real-time to evaluate (\ref{path integral control}) for the current $(x,t)$. 
\subsection{Finite Difference Method}\label{Sec: FDM}
The finite difference method is routinely used to solve PDEs. When the geometry of the computational domain is simple, the implementation of the method is straightforward and high orders of spatial accuracy can be obtained with relatively low efforts \cite{Grossmann2007}. The numerical solution to (\ref{linearized risk-minimizing HJB}) is sought on a tensor grid defined in $\overline{\mathcal{X}_{s}}$ and finite difference formulas for first, second, and mixed spatial derivatives are derived from Taylor series. In this work, the discrete differential operators are formed using centered finite differences with up to eighth-order accuracy. For points near the boundary $\partial\mathcal{X}_{s}$, asymmetric finite difference formulas of matching order are used together with the Dirichlet boundary condition. Discretization of the spatial differential operators yields a system of linear ordinary differential equations (ODEs). This system of ODEs is solved using a commercially available implicit solver ode15s \cite{Shampine_1997_matlab}, which provides an efficient framework with high temporal order of accuracy, and variable time-stepping with error control.\par 
The FDM solution of the PDE (\ref{linearized risk-minimizing HJB}) can be computed off-line and an optimal control policy can be stored in a look-up table. For real-time control, the optimal input for a given state can be immediately obtained on-line through this look-up table. However, the complexity and memory requirements of the off-line computation increase exponentially as the state-space dimension increases.

\section{Simulation Results}\label{Sec: Simulation}
In this section, we validate the proposed control synthesis framework using a 2D robot navigation example. We show how the value of the Lagrange multiplier $\eta$ affects the safety of the synthesized optimal control policy and also compare the results obtained by FDM and path integral control. The problem is illustrated in Fig. \ref{Fig. sample trajs} where a particle robot wants to travel in a 2D space from a given start position (shown by the green star) to the origin. The white region trapped between the outer and inner rectangles is $\mathcal{X}_{s}$, and the edges of the rectangles (shown in red) represent $\partial\mathcal{X}_{s}$: the boundary of $\mathcal{X}_{s}$. The region inside the inner boundary (shown in black hatching), and outside the outer boundary (not shown in the figure) represents $\overline{\mathcal{X}_{s}}^c=\mathbb{R}^n\backslash\overline{\mathcal{X}_{s}}$. The states of the system are $\pmb{p}_x$ and $\pmb{p}_y$, the positions along $x$ and $y$ respectively. The system dynamics are given by the following SDEs:
\begin{equation}\label{velocity input model}
\begin{split}
    d\pmb{p}_x={v}_xdt+\sigma d\pmb{w}_x, \quad d\pmb{p}_y={v}_ydt+\sigma d\pmb{w}_y,
\end{split}
\end{equation}
where ${v}_x$ and ${v}_y$ are velocities along $x$ and $y$ directions respectively. Assume 
\begin{equation*}
 v_x = \overline{v}_x + \widetilde{v}_x, \qquad  v_y = \overline{v}_y + \widetilde{v}_y, 
\end{equation*}
where $\overline{v}_x$ and $\overline{v}_y$ are nominal velocities given by $\overline{v}_x = -k_xp_x $ and $\overline{v}_y = -k_yp_y$ for some constants $k_x$ and $k_y$. Hence, (\ref{velocity input model}) can be rewritten as
\begin{equation*}
\begin{split}
    d\pmb{p}_x&=-k_x\pmb{p}_xdt+\widetilde{{v}}_xdt+\sigma d\pmb{w}_x,\\ d\pmb{p}_y&=-k_y\pmb{p}_ydt+\widetilde{{v}}_ydt+\sigma d\pmb{w}_y.
\end{split}
\end{equation*}
Now, the goal is to design an optimal control policy $u^* = \begin{bmatrix}\widetilde{v}^*_x&\widetilde{v}^*_y\end{bmatrix}^T$ for the cost function given in (\ref{risk-minimizing SOC problem (tf)}). We set $k_x=k_y=0.5$, $V\left(\pmb{x}(t)\right) = \pmb{p}_x^2(t) + \pmb{p}_y^2(t)$, $\psi\left(\pmb{x}(T)\right) = \pmb{p}_x^2(T) + \pmb{p}_y^2(T)$, $R=1$, $t_0=0$, and $T=2$. We use $\sigma^2=0.01$ except in Fig. \ref{Fig. pfail with s2}. For the FDM simulation, the domain is discretized into a $96\times 96$ grid and the implicit solver ode15s is used with a relative error tolerance of $0.001$. In the path integral simulation, for Monte Carlo sampling, $10000$ trajectories and a step size of $0.01$ are used. \par   
\begin{figure}[t]
    \centering
      \begin{tabular}{c c}
     \!\!\!\!\!\!\!\!\!\includegraphics[scale=0.35]{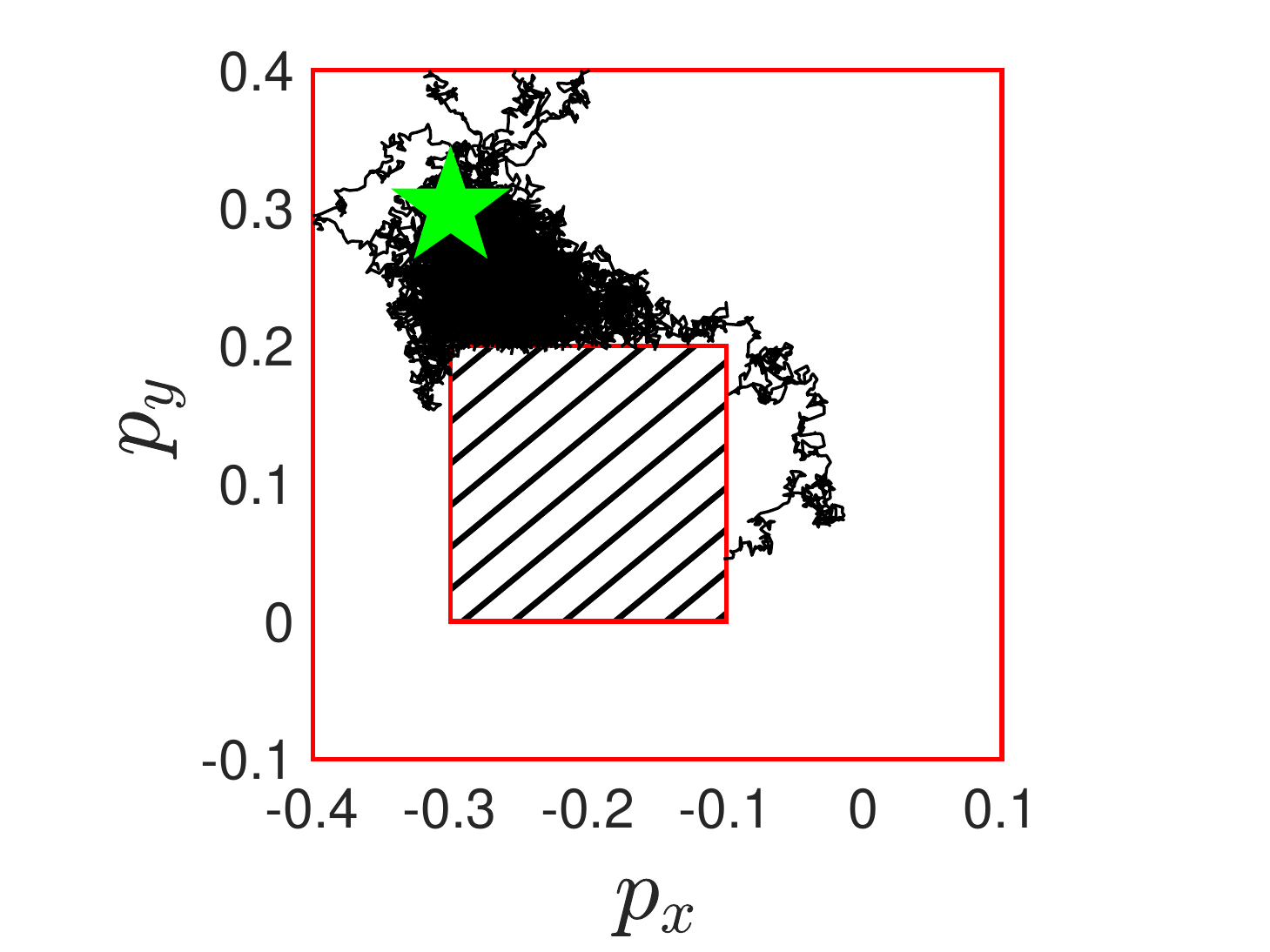} &\!\!\!\!\!\!\!\!\!\!\!\!\!\!\!\!\!\!\!\!\!\!\!\!\includegraphics[scale=0.35]{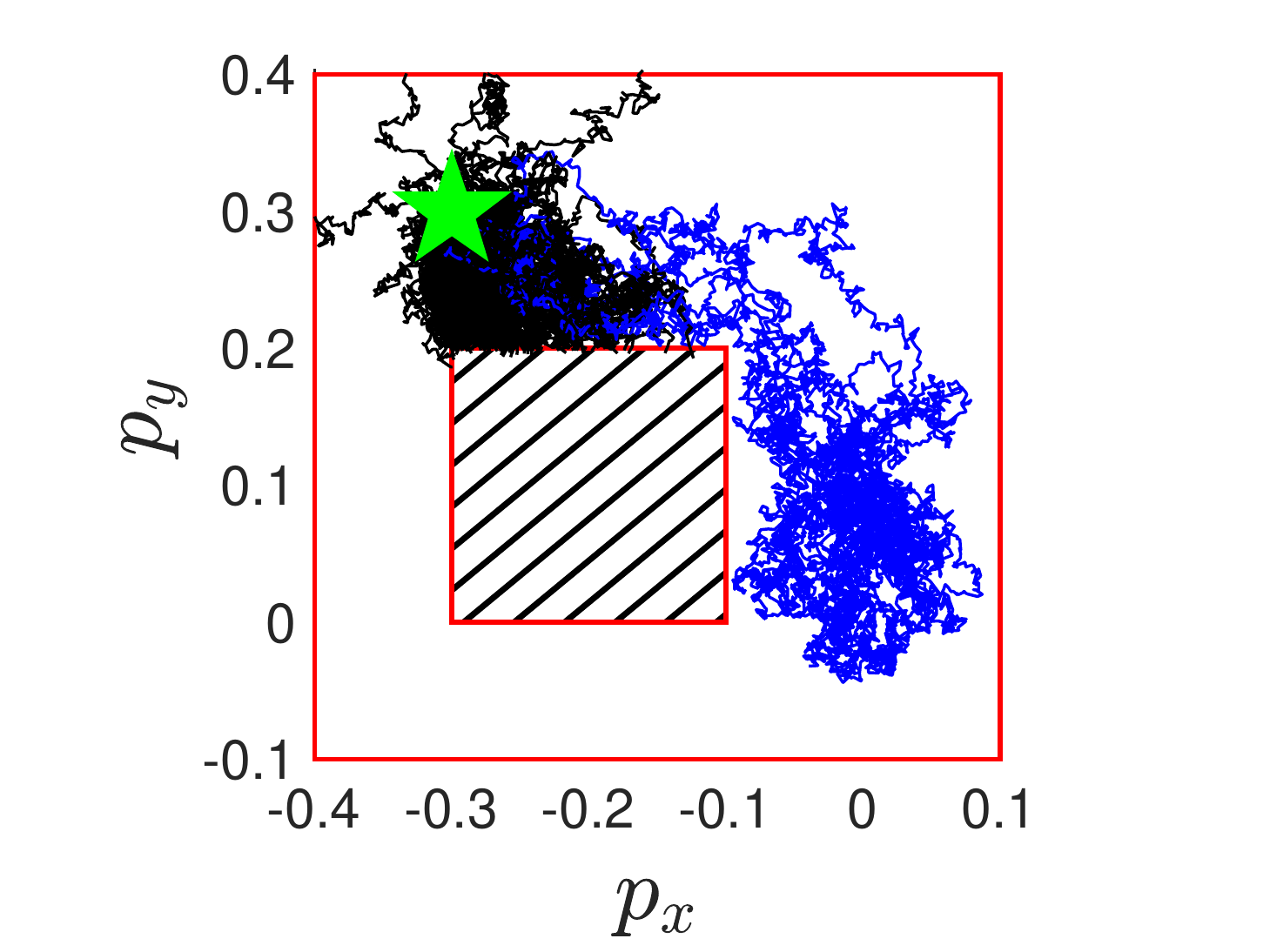} \\
      (a) $\eta = 0.05$ & \!\!\!\!\!\!\!\!(b) $\eta = 0.09$\\
       \!\!\!\!\!\!\!\!\!\includegraphics[scale=0.35]{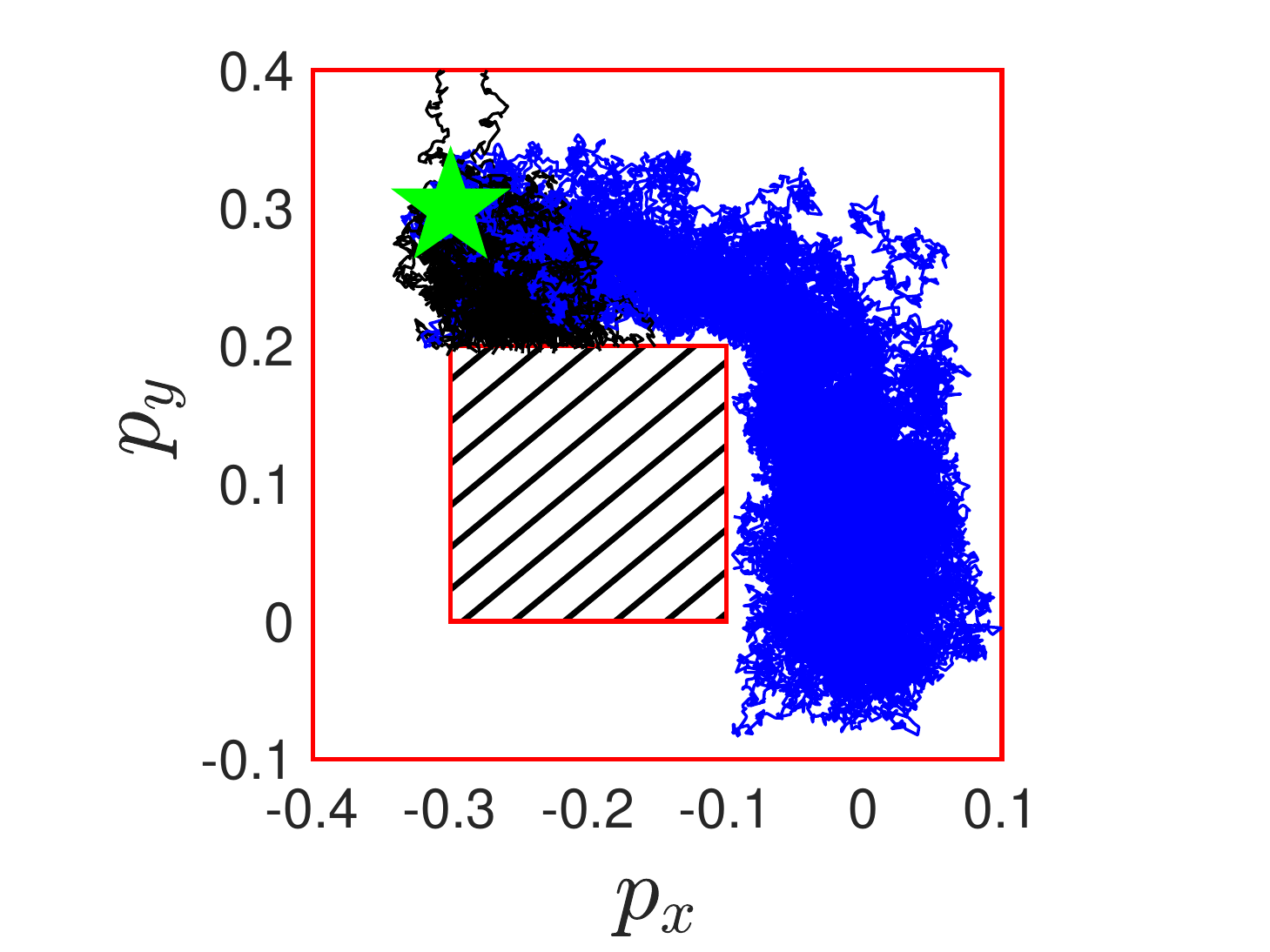} &\!\!\!\!\!\!\!\!\!\!\!\!\!\!\!\!\!\!\!\!\!\!\!\! \includegraphics[scale=0.35]{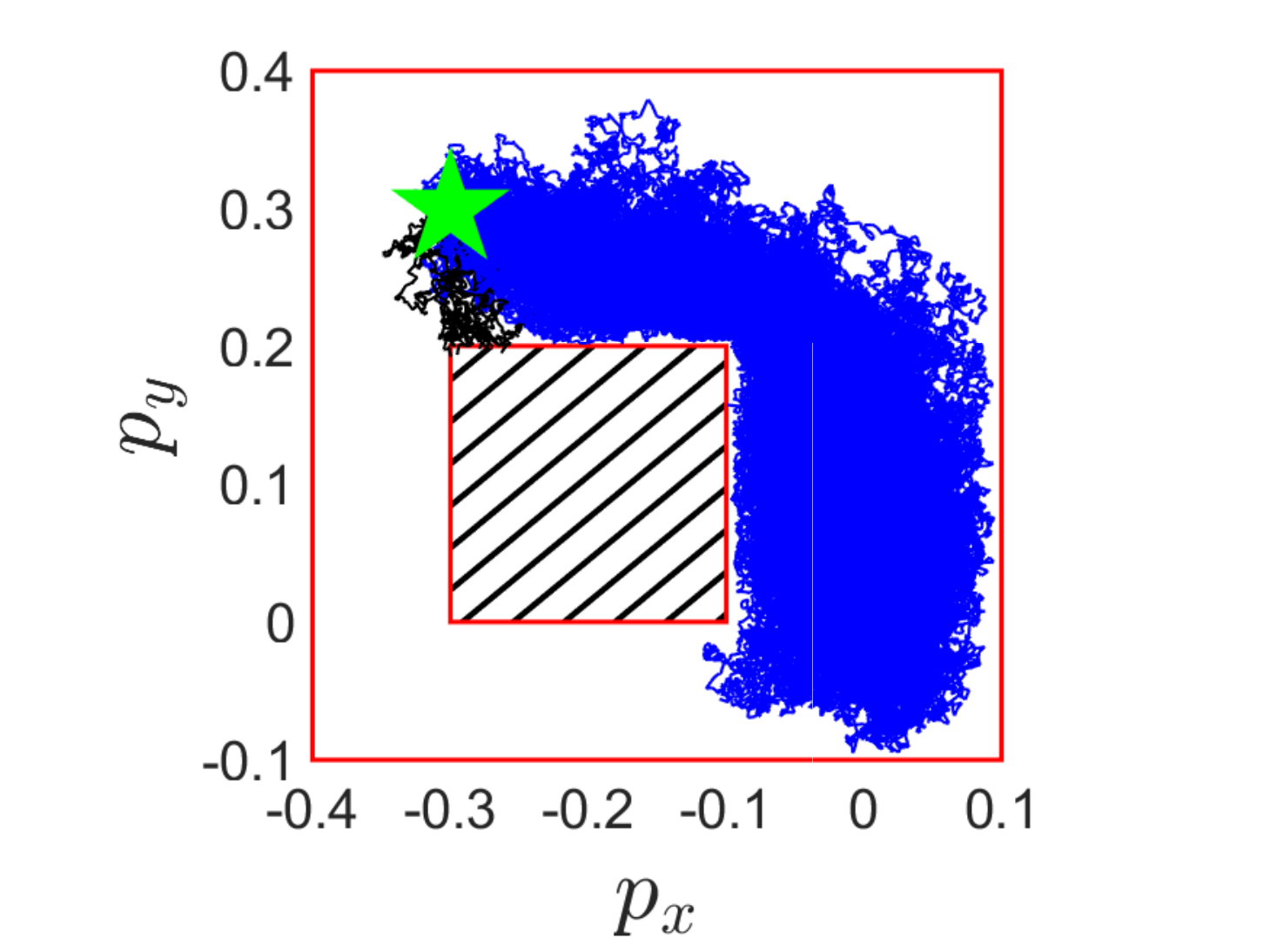}\\
        (c) $\eta = 0.11$ & \!\!\!\!\!\!\!\!(d) $\eta = 0.13$\\
       \end{tabular}
        \caption{A 2D robot navigation problem. The start position is shown by a green star and the target position is at the origin. $100$ sample trajectories generated using optimal control policies for four different values of $\eta$ are shown. The trajectories are color-coded; black paths collide with the inner or outer boundary, while blue paths converge in the neighborhood of the origin.} 
        \label{Fig. sample trajs}
\end{figure}
Fig. \ref{Fig. surf plots FDM} shows the comparison of the  solutions of $J(x,t_0)$ and $u^*(x,t_0)$ obtained from the FDM (top row) and the path integral (bottom row), for $\eta=0.05$ and $\eta=0.13$. Since the path integral is a sampling-based approach its solutions are noisier than FDM, as expected. Notice that for $\eta=0.05$, due to a lower boundary value, the control inputs $u^*(x,t_0)$ push the robot towards the inner or outer boundary from the most part of $\mathcal{X}_{s}$, except in the neighborhood of the origin. Whereas, for $\eta=0.13$, aggressive inputs $u^*(x,t_0)$ are applied near the boundary to force the robot to go towards the origin. In Fig. \ref{Fig. sample trajs}, we have plotted $100$ sample trajectories generated using synthesized optimal policies for four different values of $\eta$. The trajectories are color-coded; the black paths collide with the boundaries, while the blue paths converge in the neighborhood of the origin. We observe that as $\eta$ increases, the weight of blue paths increases whereas that of black paths reduces. In other words, at the lower values of $\eta$, there is a greater chance of the robot going out of $\mathcal{X}_{s}$ as compared to the higher values of $\eta$. We record the average computation times of FDM and path integral for running an optimal trajectory from the given start position to the origin for $\eta=0.13$. Both the algorithms are implemented in MATLAB on a consumer laptop. The average computation time for path integral was $51.35$ seconds, while for FDM, the off-line computation took $9.57$ seconds and the on-line table look-up took $0.17$ seconds. These numbers are conservative since no effort was made to optimize the algorithms for speed. The recorded computation times imply that for this particular example, FDM is more efficient than path integral. However, as mentioned in Section \ref{Sec: FDM}, the time complexity of FDM is expected to grow exponentially as the state-space dimension increases. In future work, we plan to study the scalability of FDM and path integral methods for solving the HJB PDE (\ref{HJB PDE}).\par

Table \ref{Table: P_fail} compares the failure probabilities of the policies synthesized by FDM and path integral as $\eta$ increases. Failure probabilities are computed using two approaches; first using the PDE (\ref{risk PDE}) $P_{fail}^{(1)}$, and second using the na\"ive Monte Carlo sampling $P_{fail}^{(2)}$. The second and third columns of Table \ref{Table: P_fail} list the failure probabilities of the policies synthesized by FDM. The values in the second column are obtained by solving the PDE (\ref{risk PDE}) also via FDM, whereas that in the third column are obtained by na\"ive Monte Carlo sampling. The last column lists the failure probabilities of the policies synthesized by path integral. Both $P_{fail}^{(1)}$ and $P_{fail}^{(2)}$ are the same for the path integral, since it inherently uses Monte Carlo sampling technique. For na\"ive Monte Carlo, $100$ sample trajectories are used in both FDM and path integral. 
\begin{table}[t]
\caption{Failure probabilities of the policies synthesized by FDM and path integral as $\eta$ increases. $P_{fail}^{(1)}$ is computed via the PDE (\ref{risk PDE}), while $P_{fail}^{(2)}$ is computed via the na\"ive Monte Carlo sampling with $100$ sample trajectories.}
\label{Table: P_fail}
\begin{center}
\begin{tabular}{ |c| c | c|c|} 
 \hline
 \multirow{2}{*}{$\eta$} & \multicolumn{2}{c|}{FDM} &  Path Integral\\
\cline{2-4}
  & $P_{fail}^{(1)}$ & $P_{fail}^{(2)}$  & $P_{fail}^{(1,2)}$ \\ 
 \hline
 $0.05$& 0.9987 &1& 0.99  \\

 $0.07$& 0.9908 & 0.97& 0.98 \\

 $0.09$& 0.9363 &0.93& 0.93 \\

$0.10$& 0.8442 &0.83& 0.82 \\

 $0.11$& 0.6672 &0.70& 0.65 \\

 $0.12$& 0.4292 &0.47& 0.43\\

 $0.13$& 0.2264 &0.24& 0.29 \\
\hline
\end{tabular}
\end{center}
\end{table}
\begin{figure}[t]
    \centering
\includegraphics[scale=0.17]{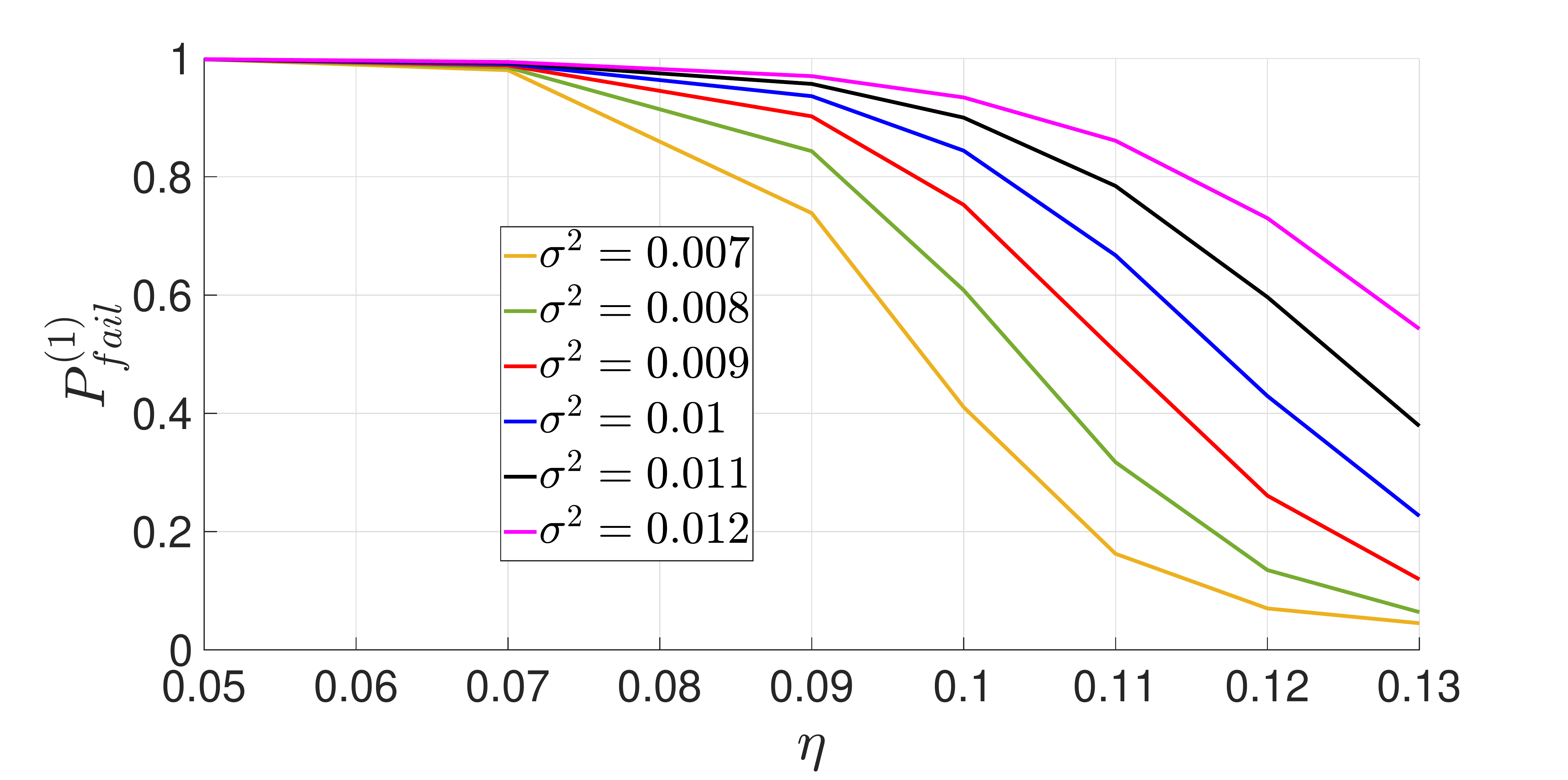} 
        \caption{Failure probabilities of the optimal policies (synthesized by FDM) for different values of $\sigma^2$ as a function $\eta$. The values are computed by solving the PDE (\ref{risk PDE}) via FDM.} 
        \label{Fig. pfail with s2}
\end{figure}
Fig. \ref{Fig. pfail with s2} shows how the failure probabilities change with $\eta$ for different noise levels ($\sigma^2$). These values are computed by solving the PDE (\ref{risk PDE}) via FDM for the optimal policies synthesized also via FDM. The simulation results presented in Fig. \ref{Fig. surf plots FDM}-\ref{Fig. pfail with s2} and Table \ref{Table: P_fail} demonstrate how $\eta$ can be tuned appropriately to achieve a desired level of safety. \par

\section{Conclusion}
The paper presented an HJB-PDE-based solution approach for a risk-constrained SOC problem. We formulated a risk-constrained SOC problem using the notion of exit time and converted it to a risk-minimizing control problem that has a time-additive cost function. It is shown that the risk-minimizing control synthesis is equivalent to solving an HJB PDE whose boundary condition can be tuned appropriately to achieve a desired level of safety. We also showed that the proposed risk-minimizing control problem can be viewed as a generalization of the risk estimation problem that can be solved via a PDE which is a special case of the HJB PDE. The finite difference and path integral methods are applied to solve a class of risk-minimizing SOC problems whose associated HJB equation is linearizable via the Cole-Hopf transformation. The proposed control synthesis framework is validated using a 2D robot navigation problem and the results obtained using FDM and path integral are compared. The analysis in this paper assumed the Lagrange multiplier $\eta$ to be a given constant. The future work will focus on establishing the correspondence between $\eta$ and the user-specified risk bound $\Delta$ and comparing our approach against the state-of-the-art chance-constrained optimal control approaches. We also plan to validate our framework on more complex and higher dimensional dynamics, and study the scalability of FDM and path integral methods.  

\bibliographystyle{IEEEtran}
\bibliography{bibliography}

\end{document}